\documentclass[10pt]{amsart}
\usepackage{latexsym, amsmath, amssymb}

\newcommand{\newsection}[1]
{\subsection{#1}\setcounter{theorem}{0} \setcounter{equation}{0}
\par \noindent \vspace{5pt}}

\newcounter{theorem}

\theoremstyle{plain}
\newtheorem{thm}[theorem]{Theorem}
\newtheorem{lem}[theorem]{Lemma}

\newtheorem{prop}[theorem]{Proposition}

\theoremstyle{definition}

\theoremstyle{remark}

\newcommand{\supp}{\operatorname{supp}}

\newcommand{\R}{\mathbb R}

\newcommand{\lap}[1]{\sqrt{-\Delta_{#1}}}

\newcommand{\Loop}{{\mathcal L}}

\newcommand{\WF}{\operatorname{WF}}

\renewcommand{\epsilon}{\varepsilon}

\usepackage{tikz}
\tikzset{node distance=2.5cm, auto}

\title[integrals of eigenfunctions over submanifolds]{looping directions and integrals of eigenfunctions over submanifolds}
\author{Emmett L. Wyman}
\address{Department of Mathematics, Johns Hopkins University, Baltimore, MD 21218}

\begin{document}

\maketitle

\begin{abstract}
Let $(M,g)$ be a compact $n$-dimensional Riemannian manifold without boundary and $e_\lambda$ be an $L^2$-normalized eigenfunction of the Laplace-Beltrami operator with respect to the metric $g$, i.e
\[
	-\Delta_g e_\lambda = \lambda^2 e_\lambda \qquad \text{ and } \qquad \| e_\lambda \|_{L^2(M)} = 1.
\]
Let $\Sigma$ be a $d$-dimensional submanifold and $d\mu$ a smooth, compactly supported measure on $\Sigma$. It is well-known (e.g. proved by Zelditch in ~\cite{ZelK} in far greater generality) that
\[
	\int_\Sigma e_\lambda \, d\mu = O(\lambda^\frac{n-d-1}{2}).
\]
We show this bound improves to $o(\lambda^\frac{n-d-1}{2})$ provided the set of looping directions,
\[
	\Loop_{\Sigma} = \{ (x,\xi) \in SN^*\Sigma : \Phi_t(x,\xi) \in SN^*\Sigma \text{ for some } t > 0 \}
\]
has measure zero as a subset of $SN^*\Sigma$, where here $\Phi_t$ is the geodesic flow on the cosphere bundle $S^*M$ and $SN^*\Sigma$ is the unit conormal bundle over $\Sigma$.
\end{abstract}


\newsection{Introduction}

In what follows, $(M,g)$ will denote a compact, boundaryless, $n$-dimensional Riemannian manifold. Let $\Delta_g$ denote the Laplace-Beltrami operator and $e_\lambda$ an $L^2$-normalized eigenfunction of $\Delta_g$ on $M$, i.e.
\[
    -\Delta_g e_\lambda = \lambda^2 e_\lambda \qquad \text{ and } \qquad \| e_\lambda \|_{L^2(M)} = 1.
\]

In ~\cite{SZmanifolds}, Sogge and Zelditch investigate which manifolds have a sequence of eigenfunctions $e_\lambda$ with $\lambda \to \infty$ which saturate the bound
\[
    \|e_\lambda\|_{L^\infty(M)} = O(\lambda^\frac{n-1}{2}).
\]
They show that the bound above is necessarily $o(\lambda^\frac{n-1}{2})$ if at each each $x$, the set of looping directions through $x$,
\[
	\Loop_x = \{\xi \in S_x^* M : \Phi_t(x,\xi) \in S_x^*M \text{ for some } t > 0 \}
\]
has measure zero\footnote{Let $\psi_j : U_j \subset \R^n \to M$ be coordinate charts of a general manifold $M$. We say a set $E \subset M$ has measure zero if the preimage $\psi_j^{-1}(E)$ has Lebesgue measure $0$ in $\R^n$ for each chart $\psi_j$. Sets of Lebesgue measure zero are preserved under transition maps, ensuring this definition is intrinsic to the $C^\infty$ structure of $M$.} as a subset of $S^*_xM$ for each $x \in M$. Here, $\Phi_t$ denotes the geodesic flow on the unit cosphere bundle $S^*M$ after time $t$. The hypotheses were later weakened by Sogge, Toth, and Zelditch in ~\cite{STZ}, where they showed
\[
    \|e_\lambda\|_{L^\infty(M)} = o(\lambda^\frac{n-1}{2})
\]
provided the set of \emph{recurrent} directions at $x$ has measure zero for each $x \in M$.

We are interested in extending the result in ~\cite{SZmanifolds} to integrals of eigenfunctions over submanifolds. Let $\Sigma$ be a submanifold of dimension $d$ with $d < n$ and a measure $d\mu(x) = h(x)d\sigma(x)$ where $d\sigma$ is the surface measure on $\Sigma$ and $h$ is a smooth function supported on a compact subset of $\Sigma$. In his 1992 paper ~\cite{ZelK}, Zelditch proves, among other things, a Weyl law- type bound
\begin{equation}\label{zelditch's weyl law}
	\sum_{\lambda_j \leq \lambda} \left| \int_\Sigma e_j \, d\mu \right|^2 \sim \lambda^{n-d} + O(\lambda^{n-d-1})
\end{equation}
from which follows
\begin{equation} \label{big O}
    \int_\Sigma e_\lambda \, d\mu = O(\lambda^{\frac{n-d-1}{2}}).
\end{equation}
Though \eqref{big O} is already well known, we will give a direct proof which will be illustrative for our main argument.

\begin{thm}\label{standard theorem}
Let $\Sigma$ be a $d$-dimensional submanifold with $0 \leq d < n$, and $d\mu(x) = h(x) d\sigma(x)$ where $h$ is a smooth, real valued function supported on a compact neighborhood in $\Sigma$. Then, \eqref{big O} holds.
\end{thm}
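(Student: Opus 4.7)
The plan is to establish the bound via a smoothed spectral projection, a short-time wave parametrix, and stationary phase; this template will later accommodate the looping-direction hypothesis for the main theorem. Let $P = \sqrt{-\Delta_g}$ and fix a real $\chi \in \mathcal{S}(\mathbb{R})$ with $\chi(0) = 1$ and $\hat\chi \in C_c^\infty((-\delta,\delta))$ for some $\delta > 0$ smaller than half the injectivity radius of $M$. Since $\chi(\lambda-P) e_\lambda = e_\lambda$, self-adjointness and Cauchy--Schwarz give
\[
\left|\int_\Sigma e_\lambda\,d\mu\right| = \left|\langle e_\lambda, \chi(\lambda-P)\mu\rangle\right| \leq \|\chi(\lambda-P)\mu\|_{L^2(M)},
\]
so it suffices to show $I(\lambda) := \|\chi(\lambda-P)\mu\|_{L^2(M)}^2 = O(\lambda^{n-d-1})$.

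Writing $I(\lambda) = \langle \chi^2(\lambda-P)\mu,\mu\rangle$ and inverting by the wave group,
\[
\chi^2(\lambda-P) = \frac{1}{2\pi}\int \widehat{\chi^2}(t)\,e^{it\lambda}\,e^{-itP}\,dt,
\]
where $\widehat{\chi^2} = \hat\chi \ast \hat\chi$ is supported in $(-2\delta,2\delta)$. Within this time window the half-wave kernel admits the standard FIO parametrix
\[
e^{-itP}(x,y) = (2\pi)^{-n}\int_{\mathbb{R}^n} e^{i(\psi_0(x,y,\xi) - t|\xi|_y)}\,a(t,x,y,\xi)\,d\xi
\]
in local coordinates, with $\psi_0(y,y,\xi) = 0$, $\nabla_x \psi_0(y,y,\xi) = \xi$, and $a$ a classical symbol of order zero. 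Substituting, performing the $t$-integration to localize $|\xi|_y \approx \lambda$, introducing polar coordinates $\xi = r\omega$, and integrating $r$ against $\chi^2(\lambda - r)$ reduces $I(\lambda)$ to a principal contribution of the form
\[
\lambda^{n-1}\int_{S^{n-1}}\int_{\Sigma\times\Sigma} e^{i\lambda \psi_0(x,y,\omega)}\,A(x,y,\omega)\,d\mu(x)\,d\mu(y)\,d\omega,
\]
plus analogous lower-order terms of the same structure.

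Finally, parametrize $\Sigma$ locally by $\gamma : U \subset \mathbb{R}^d \to \Sigma$ and apply stationary phase in $(s,t,\omega) \in U \times U \times S^{n-1}$. Using $\psi_0(x,y,\xi) \sim \langle \exp_y^{-1}(x),\xi\rangle_{g(y)}$, the critical conditions $\partial_s \psi_0 = 0$, $\partial_t \psi_0 = 0$, and $\partial_\omega \psi_0 = 0$ (the last taken tangent to $S^{n-1}$) force $s = t$ and $\omega \in SN^*_{\gamma(s)}\Sigma$, so the critical set is canonically $SN^*\Sigma$, of dimension $n-1$ and codimension $2d$ in the domain. Stationary phase then yields a factor $\lambda^{-d}$, giving $I(\lambda) = O(\lambda^{n-1-d}) = O(\lambda^{n-d-1})$, which concludes the proof.

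The main obstacle is verifying the clean-phase and nondegeneracy condition for the $2d \times 2d$ transverse Hessian. After the change of variables $u = (s+t)/2$, $v = s-t$ and decomposition of $\omega$ into pieces tangent and normal to the fiber $SN^*_{\gamma(u)}\Sigma \subset S^{n-1}$, this reduces to an explicit block matrix whose nondegeneracy follows from that of the induced metric on $\Sigma$. The remaining ingredients---the Fourier inversion, the wave parametrix within the injectivity radius, and the polar reduction---are standard, and the same oscillatory integral representation of $I(\lambda)$ will be the starting point for the main theorem, where one pushes the time cutoff $\delta$ slowly to infinity and exploits the measure-zero hypothesis on $\Loop_\Sigma$ to obtain the $o$-improvement.
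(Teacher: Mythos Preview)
Your approach is essentially the paper's: reduce to a smoothed spectral sum via $\chi(\lambda-P)$, invoke the short-time half-wave parametrix, and finish by stationary phase, so the overall strategy matches. The one tactical difference is how the stationary phase is organized. You integrate out $t$ and $r$ first and then run a clean-phase (Morse--Bott) argument over $(s,t,\omega)\in U\times U\times S^{n-1}$ with critical manifold $SN^*\Sigma$ of codimension $2d$, leaving the transverse $2d\times 2d$ Hessian to be checked. The paper instead works in adapted coordinates \eqref{coordinates}, writes $\bar\xi=r\omega$, \emph{fixes} $y'$ and $\omega$, and does stationary phase in the remaining $2d+2$ variables $(t,x',\xi',r)$; this forces an isolated critical point $(0,y',0,1)$ and reduces the nondegeneracy check to an explicit $(2d+2)\times(2d+2)$ Hessian that is visibly block off-diagonal with determinant $\pm 1$. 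Both routes give the same $\lambda^{n-d-1}$, but the paper's choice of frozen variables sidesteps the clean-intersection verification you flag as the main obstacle, at the cost of a slightly larger (but more transparent) Hessian.
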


We let $SN^*\Sigma$ denote the unit conormal bundle over $\Sigma$. We define the set of looping directions through $\Sigma$ by
\[
	\Loop_\Sigma = \{ (x,\xi) \in SN^* \Sigma : \Phi_t(x,\xi) \in SN^*\Sigma \text{ for some } t > 0 \}.
\]
Our main result shows the bound \eqref{big O} cannot be saturated whenever the set of looping directions through $\Sigma$ has measure zero.

\begin{thm} \label{looping theorem}
    Assume the hypotheses of Theorem \ref{standard theorem} and additionally that $\Loop_{\Sigma}$ has measure zero as a subset of $SN^*\Sigma$. Then,
    \[
        \int_\Sigma e_\lambda \, d\mu = o(\lambda^\frac{n-d-1}{2}).
    \]
\end{thm}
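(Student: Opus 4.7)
My plan is to adapt the spectral-localization argument behind Theorem~\ref{standard theorem} and to extract the looping geometry from the long-time portion of the wave kernel, in the spirit of Sogge--Zelditch~\cite{SZmanifolds}. Fix $\chi\in\mathcal{S}(\R)$ with $\chi(0)=1$, $\chi\ge 0$, and $\hat\chi\in C_c^\infty(\R)$. Since $e_\lambda$ is an eigenfunction, $\chi(T(\lambda-\lap g))e_\lambda=e_\lambda$ for every $T>0$, so by Cauchy--Schwarz
\[
\Bigl|\int_\Sigma e_\lambda\,d\mu\Bigr|^2 \le \iint_{\Sigma\times\Sigma}\tilde K_T(x,x')\,d\mu(x)\,d\mu(x'),
\]
where $\tilde K_T$ is the Schwartz kernel of $|\chi|^2(T(\lambda-\lap g))$. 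Setting $\psi=|\chi|^2$, Fourier inversion gives $\tilde K_T(x,x')=(2\pi T)^{-1}\int\hat\psi(t/T)e^{it\lambda}U(t,x,x')\,dt$, where $U(t,\cdot,\cdot)$ is the half-wave kernel of $\lap g$.

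Fix a cutoff $\eta\in C_c^\infty(\R)$ equal to $1$ in a neighborhood of $0$ and split $\tilde K_T=\tilde K_T^{\mathrm{sh}}+\tilde K_T^{\mathrm{lg}}$ according to whether the $t$-integrand carries $\eta(t)$ or $1-\eta(t)$. The short-time piece is essentially what is estimated in Theorem~\ref{standard theorem}, with the extra prefactor $1/T$; rerunning the H\"ormander parametrix and tangential stationary-phase computation in coordinates adapted to $\Sigma$ yields
\[
\iint_{\Sigma\times\Sigma}\tilde K_T^{\mathrm{sh}}\,d\mu\,d\mu \le \frac{C_0}{T}\lambda^{n-d-1}+o_T(\lambda^{n-d-1})
\]
with $C_0$ independent of $T$. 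The $1/T$ gain is the familiar spectral-window effect and will ultimately render the short-time contribution arbitrarily small relative to $\lambda^{n-d-1}$.

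The core of the argument is to prove, for each fixed $T$,
\[
\iint_{\Sigma\times\Sigma}\tilde K_T^{\mathrm{lg}}\,d\mu\,d\mu = o(\lambda^{n-d-1}) \quad \text{as } \lambda\to\infty.
\]
For this I would partition the compact $t$-range in question into finitely many short sub-intervals on which $U(t,\cdot,\cdot)$ admits a Fourier integral representation whose canonical relation is the graph of $\Phi_t$, obtaining for $\iint\tilde K_T^{\mathrm{lg}}\,d\mu\,d\mu$ an oscillatory integral on $\Sigma\times\Sigma\times\R_t\times(\R_\xi^n\setminus 0)$. In conormal coordinates along $\Sigma$ the phase is stationary in all variables exactly when $(x,\xi)\in SN^*\Sigma$ satisfies $\Phi_t(x,\xi)\in SN^*\Sigma$, i.e.\ at points of $\Loop_\Sigma$. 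Given $\epsilon'>0$, I would choose an open neighborhood $V\subset SN^*\Sigma$ of $\Loop_\Sigma$ with measure less than $\epsilon'$, split the $\xi$-integral via a partition of unity, obtain rapid $\lambda$-decay on the complement of $V$ by non-stationary-phase integration by parts, and bound the contribution from $V$ by the trivial amplitude estimate $C_T\,\epsilon'\,\lambda^{n-d-1}$. Since $\epsilon'$ is arbitrary, this yields the desired $o(\lambda^{n-d-1})$.

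Combining the two estimates, for any $\epsilon>0$ I first choose $T$ so that $C_0/T<\epsilon/2$, and then take $\lambda$ large enough that the long-time term together with the $o_T$ short-time error is at most $(\epsilon/2)\lambda^{n-d-1}$; this gives $\bigl|\int_\Sigma e_\lambda\,d\mu\bigr|^2\le\epsilon\lambda^{n-d-1}$, proving the theorem. The main obstacle I anticipate is the microlocal step: setting up Fourier integral representations of $U(t,\cdot,\cdot)$ whose composition with the restriction to $\Sigma$ remains tractable through possibly multiple caustic or conjugate points along returning geodesics, and verifying that the non-looping condition is precisely what forces $\xi$-nondegeneracy of the phase on the complement of an arbitrarily small neighborhood of $\Loop_\Sigma$.
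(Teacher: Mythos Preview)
Your overall architecture is the same as the paper's: localize spectrally with $\chi(T(\lambda-\lap g))$, split the time integral into short and long pieces, gain $T^{-1}$ on the short piece, and show the long piece is $o(\lambda^{n-d-1})$ by separating a small neighborhood of $\Loop_\Sigma$ from its complement. The difference---and the gap---is in how you execute the long-time step.

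The sentence ``bound the contribution from $V$ by the trivial amplitude estimate $C_T\,\epsilon'\,\lambda^{n-d-1}$'' is where the proposal breaks. A genuinely trivial amplitude bound on the long-time oscillatory integral gives $\lambda^{n}$, not $\lambda^{n-d-1}$; the $d+1$ powers you need to remove come from stationary phase, and you then have to check that the \emph{leading coefficient} after stationary phase is controlled by the measure of $V$ (equivalently by $\int_{SN^*\Sigma}|b|^2$, since the cutoff enters as $bX_{T,\lambda}b^*$). That is a real computation, not a triviality, and doing it directly on a long-time FIO parametrix---through caustics and conjugate points, as you yourself flag---is awkward. The paper avoids this entirely: it introduces order-zero pseudodifferential cutoffs $b,B$ acting on $\mu$ (Lemma~\ref{b B decomposition}), so that (i) the $B$-piece has wavefront set disjoint from its geodesic translates for $t\in\supp\hat X_T$, giving $O_T(\lambda^{-N})$ by Proposition~\ref{B contribution} with no explicit long-time parametrix; and (ii) the $b$-piece is rewritten as the spectral sum $\sum_j X_T(\lambda_j-\lambda)\bigl|\int_\Sigma b e_j\,d\mu\bigr|^2$ and then bounded unit-band by unit-band using Proposition~\ref{b contribution}, which is a \emph{short-time} stationary-phase computation whose leading term is exactly $C\bigl(\int|b|^2h^2\bigr)\lambda^{n-d-1}$ with $C$ independent of $b$. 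The rapid decay of $X_T$ sums the bands. Thus the smallness in $\epsilon'$ really does appear, but only after this careful reduction to the short-time parametrix; your sketch skips precisely this mechanism. If you want to repair your route, the cleanest fix is to replace the direct long-time FIO analysis by the $b,B$ decomposition plus the spectral-sum trick, which is what the paper does.
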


The argument for Theorem \ref{looping theorem} is modeled after Sogge and Zelditch's arguments in ~\cite{SZmanifolds}. In fact if $d = 0$ we obtain the first part of ~\cite[Theorem 1.2]{SZmanifolds}.

We expect the bound \eqref{big O} to be saturated in the case $M = S^n$, since $\Loop_\Sigma = SN^*\Sigma$ always. The spectrum of $-\Delta_g$ on $S^n$ consists of $\lambda_j^2$ where
\[
	\lambda_j = \sqrt{j(j+n-1)} \qquad \text{ for } j = 0,1,2,\ldots
\]
(see ~\cite{SoggeHang}). For each $\lambda_j$ we select an eigenfunction $e_j$ maximizing $\left| \int_\Sigma e_j \, d\mu \right|$.
By Zelditch's Weyl law type bound \eqref{zelditch's weyl law}, there exists an increasing sequence of $\lambda$ with $\lambda \to \infty$ for which
\[
	\sum_{\lambda_j \in [\lambda, \lambda+1]} \left| \int_\Sigma e_j \, d\mu \right|^2 \gtrsim \lambda^{n-d-1}.
\]
Since the gaps $\lambda_j - \lambda_{j-1}$ approach a constant width of $1$ as $j \to \infty$, we may pick a subsequence of $\lambda$'s so that only one $\lambda_j$ falls in each band $[\lambda,\lambda+1]$. Hence,
\[
	\left|\int_\Sigma e_j \, d\mu \right| \gtrsim \lambda_j^\frac{n-d-1}{2}
\]
for some subsequence of $\lambda_j$.

It is worth remarking that there are some cases where the hypotheses of Theorem \ref{looping theorem} are naturally fulfilled and we obtain an improvement over \eqref{big O}. Chen and Sogge ~\cite{CS} proved that if $M$ is $2$-dimensional and has negative sectional curvature, and $\Sigma$ is a geodesic in $M$,
\[
	\int_\Sigma e_\lambda \, d\mu = o(1).
\]
They consider a lift $\tilde \Sigma$ of $\Sigma$ to the universal cover of $M$. Using the Gauss-Bonnet theorem, they show for each non-identity deck transformation $\alpha$, there is at most one geodesic which intersects both $\tilde \Sigma$ and $\alpha(\tilde \Sigma)$ perpendicularly. Since there are only countably many deck transformations, $\Loop_\Sigma$ is at most a countable subset of $SN^*\Sigma$ and so satisfies the hypotheses of Theorem \ref{looping theorem}. This result was extended to a larger class of curves in ~\cite{W} which similarly have countable $\Loop_\Sigma$. \\

\noindent \textbf{Acknowledgements.} The author would like to thank Yakun Xi for pointing out an error at the end of the proof of Proposition \ref{standard prop}.


\newsection{Proof of Theorem \ref{standard theorem}}

Theorem \ref{standard theorem} is a consequence of this stronger result.

\begin{prop}\label{standard prop}
Given the hypotheses of Theorem \ref{standard theorem}, we have
\[
	\sum_{\lambda_j \in [\lambda, \lambda + 1]} \left| \int_\Sigma e_\lambda \, d\mu \right|^2 \leq C \lambda^{n-d-1}.
\]
\end{prop}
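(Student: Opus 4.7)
Setting $P := \sqrt{-\Delta_g}$ and $Sf := \int_\Sigma f\,d\mu$, my plan is to majorize the sharp cluster sum by a smoothed one and then bound the smoothed sum via a kernel calculation. Choose $\rho \in \mathcal{S}(\mathbb R)$ with $\rho \geq 0$, $\rho \geq 1$ on $[-1,1]$, and $\hat\rho \in C_c^\infty((-\delta,\delta))$ for some $\delta > 0$ less than half the injectivity radius of $(M,g)$, and set $\tilde\rho := \rho^2$. Since $\mu = h\,d\sigma$ is real with $\langle\mu, e_j\rangle = Se_j$,
\[
\sum_{\lambda_j \in [\lambda,\lambda+1]} |Se_j|^2 \leq \sum_j \tilde\rho(\lambda-\lambda_j)|Se_j|^2 = \langle \tilde\rho(\lambda-P)\mu,\mu\rangle = \iint_{\Sigma\times\Sigma} K_\lambda(x,y)\,h(x)h(y)\,d\sigma(x)\,d\sigma(y),
\]
where $K_\lambda$ is the Schwartz kernel of $\tilde\rho(\lambda - P)$.

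Next I would express $K_\lambda$ as an oscillatory integral. Fourier inversion gives $\tilde\rho(\lambda-P) = \tfrac{1}{2\pi}\int\hat{\tilde\rho}(t)e^{it\lambda}e^{-itP}\,dt$, and since $\hat{\tilde\rho}$ is supported on $(-2\delta,2\delta)$, Hörmander's short-time parametrix for the half-wave group yields, in any fixed coordinate chart and modulo an $O(\lambda^{-\infty})$ remainder, a representation
\[
K_\lambda(x,y) = \int_{\mathbb R^n} e^{i\langle x-y,\xi\rangle}\,q(x,y,\xi)\,\tilde\rho(\lambda - |\xi|_{g(x)})\,d\xi,
\]
where $q$ is a classical symbol of order $0$ with compact support in $(x,y)$. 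Via a partition of unity, reduce to $h$ supported in a Fermi normal coordinate patch where $\Sigma = \{x'' = 0\}$ and $d\sigma = dx'$; splitting $\xi = (\xi',\xi'')\in \mathbb R^d\times\mathbb R^{n-d}$, the cluster sum is bounded by
\[
\int_{\mathbb R^n} \tilde\rho(\lambda - |\xi|_g)\,A_\lambda(\xi)\,d\xi + O(\lambda^{-\infty}),\qquad A_\lambda(\xi) := \iint e^{i\langle x'-y',\xi'\rangle}\,q\bigl((x',0),(y',0),\xi\bigr)\,h(x')h(y')\,dx'\,dy'.
\]

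The final step is a non-stationary phase plus volume count. Integrating by parts in $x'$ against $\xi'\cdot\nabla_{x'}/(i|\xi'|^2)$ as many times as needed yields $|A_\lambda(\xi)| \lesssim_N (1+|\xi'|)^{-N}$ for any $N$, with constants depending on $\|h\|_{C^N}$ and symbol bounds on $q$, while the Schwartz decay of $\tilde\rho$ effectively restricts $|\xi|_g$ to an $O(1)$-annulus around $\lambda$. The admissible region $\{|\xi'|\lesssim 1,\ |\xi|_g\in[\lambda - C,\lambda+C]\}$ thus has Lebesgue volume $O(\lambda^{n-d-1})$ in $\mathbb R^n$ — an $O(1)$-ball in $\xi'$ times an $O(1)$-thick annulus of radius $\sim\lambda$ in $\xi''$ — which gives the desired bound. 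The main technical burden is the parametrix step: producing the oscillatory representation of $K_\lambda$ with a genuinely $O(\lambda^{-\infty})$ error and a phase matching the geodesic flow for short times. The smallness of $\delta$ keeps us well away from caustics, so standard FIO calculus applies and the remaining steps are routine.
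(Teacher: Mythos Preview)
Your strategy---integrate by parts in the tangential base variable to force $|\xi'|\lesssim 1$ and then count volume in the $\xi''$-annulus---is genuinely different from the paper's, which keeps the $t$-integral and carries out a full stationary-phase argument in the $(2d+2)$ variables $(t,x',\xi',r)$, computing the Hessian explicitly. When it works, your route is more elementary and avoids the Hessian bookkeeping.

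The gap is in the kernel representation you invoke. The H\"ormander parametrix used in the paper produces the phase $\varphi(x,y,\xi)$ with $\varphi(x,y,\xi)=\langle x-y,\xi\rangle+O(|x-y|^2|\xi|)$, not the exact linear phase you wrote; with the true phase one has $\nabla_{x'}\varphi((x',0),(y',0),\xi)=\xi'+O(|x'-y'|\,|\xi|)$, and since $|\xi|\sim\lambda$ on $\supp\tilde\rho(\lambda-p)$, integration by parts in $x'$ gains only a factor $\bigl(|\xi'|+O(\lambda|x'-y'|)\bigr)^{-1}$, which is useless unless $|x'-y'|\lesssim\lambda^{-1}$. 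There is also an inconsistency in your factorization: you write $K_\lambda$ with $\tilde\rho(\lambda-|\xi|_{g(x)})$ and then pull ``$\tilde\rho(\lambda-|\xi|_g)$'' outside the $x',y'$-integral to define $A_\lambda(\xi)$, but that factor depends on $x'$. If instead you leave it inside and differentiate, each $\partial_{x'}$ hitting $\tilde\rho(\lambda-p(x,\xi))$ costs $|\partial_{x'}p|\sim\lambda$, again cancelling the $|\xi'|^{-1}$ gain. One clean repair is to use the Lax form of the parametrix, with phase $\psi(t,x,\xi)-\langle y,\xi\rangle$ genuinely linear in $y$, and integrate by parts in $y'$ rather than $x'$; the amplitude then carries no $y$-dependence beyond $h(y')$, the $(1+|\xi'|)^{-N}$ decay is honest, and your volume count goes through. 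As written, though, the ``main technical burden'' you flag is not discharged, and the argument does not close.
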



We lay out some local coordinates which we will use repeatedly. Fix $p \in \Sigma$, and consider local coordinates $x = (x_1,\ldots, x_n) = (x',\bar x)$ centered about $p$, where $x'$ denotes the first $d$ coordinates and $\bar x$ the remaining $n-d$ coordinates. We let $(x',0)$ parametrize $\Sigma$ on a neighborhood of $p$ in such a way that $dx'$ agrees with the surface measure on $\Sigma$. Let $g$ denote the metric tensor with respect to our local coordinates. We require
\[
    g = \left[ \begin{array}{cc}
    * & 0 \\
    0 & I
    \end{array}
    \right] \qquad \text{ wherever } \bar x = 0,
\]
where $I$ here is the $(n-d) \times (n-d)$ identity matrix. This is ensured after inductively picking smooth sections $v_j(x')$ of $SN\Sigma$ for $j = d+1,\ldots, n$ with $\langle v_i, v_j \rangle = \delta_{ij}$, and then using
\begin{equation} \label{coordinates}
    (x_1,\ldots, x_n) \mapsto \exp(x_{d+1} v_{d+1}(x') + \cdots + x_{n} v_{n}(x'))
\end{equation}
as our coordinate map.


Now we prove Proposition \ref{standard prop}. For simplicity, we assume without loss of generality that $d\mu$ is a real measure. We set\footnote{This reduction is standard and appears in ~\cite{SZmanifolds}, ~\cite{CS}, proofs of the sharp Weyl law as presented in ~\cite{SoggeHang} and ~\cite{FIOs}, and in many other similar problems.} $\chi \in C^\infty(\R)$ with $\chi \geq 0$ and $\hat \chi$ supported on a small neighborhood of $0$. It suffices to show
\[
	\sum_j \chi(\lambda_j - \lambda) \left| \int_\Sigma e_j \, d\mu \right|^2 \leq C\lambda^{n-d-1}.
\]
By Fourier inversion, we write the left hand side as
\begin{align}
\nonumber \sum_j \int_\Sigma \int_\Sigma &\chi(\lambda_j - \lambda) e_j(x) \overline{e_j(y)} \, d\mu(x) \, d\mu(y) \\
\nonumber &= \frac{1}{2\pi} \sum_j \int_{-\infty}^\infty \int_\Sigma \int_\Sigma \hat \chi(t) e^{-it\lambda}  e^{it \lambda_j} e_j(x) \overline{e_j(y)} \, d\mu(x) \, d\mu(y) \, dt \\
\label{sp before para} &= \frac{1}{2\pi} \int_{-\infty}^\infty \int_\Sigma \int_\Sigma \hat \chi(t) e^{-it\lambda}  e^{it\lap g}(x,y) \, d\mu(x) \, d\mu(y) \, dt
\end{align}
where $e^{it\lap g}$ is the half wave operator with kernel
\[
	e^{it\lap g}(x,y) = \sum_j e^{it\lambda_j} e_j(x) \overline{e_j(y)}.
\]
Using the coordinates $x = (x', \bar x)$ as in \eqref{coordinates}, the last line of \eqref{sp before para} is written
\begin{equation} \label{sp in local coordinates}
	= \frac{1}{2\pi} \int_{-\infty}^\infty \int_{\R^d} \int_{\R^d} \hat \chi(t) e^{-it\lambda} e^{it\lap g}(x',y') h(x') h(y') \, dx' \, dy' \, dt
\end{equation}
where $h$ is a smooth function on $\R^{d}$ such that $d\mu(x) = h(x')dx'$, and where by abuse of notation $x'$ is taken to mean $(x',0)$ where appropriate. We now use H\"ormander's parametrix as presented in ~\cite{FIOs}, i.e
\[
	e^{it\lap g}(x,y) = \frac{1}{(2\pi)^n} \int_{\R^n} e^{i(\varphi(x,y,\xi) + tp(y,\xi))} q(t,x,y,\xi) \, d\xi
\]
modulo a smooth kernel, where
\[
	p(y,\xi) = \sqrt{\sum_{j,k} g^{jk}(y) \xi_j \xi_k}
\]
is the principal symbol of $\lap g$ and $\varphi$ is smooth for $|\xi| > 0$, homogeneous of degree $1$ in $\xi$, and satisfies
\begin{equation} \label{local varphi}
	|\partial_{\xi}^\alpha (\varphi(x,y,\xi) - \langle x - y, \xi \rangle)| \leq C_\alpha |x - y|^2|\xi|^{1 - |\alpha|}
\end{equation}
for multiindices $\alpha \geq 0$ and for $x$ and $y$ sufficiently close.
Moreover, $q$ satisfies bounds
\begin{equation} \label{q bounds}
	|\partial_\xi^\alpha \partial_{t,x,y}^\beta q(t,x,y,\xi)| \leq C_{\alpha,\beta} (1 + |\xi|)^{-|\alpha|},
\end{equation}
and where for $t \in \supp \hat \chi$, $q$ is supported on a small neighborhood of $x = y$. Hence, we write \eqref{sp in local coordinates} as
\begin{align*}
	= \frac{1}{(2\pi)^{n+1}} \int_{-\infty}^\infty \int_{\R^d} \int_{\R^d} \int_{\R^n} e^{i(\varphi(x',y',\xi) + tp(y',\xi) - t\lambda)} \hat \chi(t) q(t,x',y',\xi) &h(x')h(y')\\
	&\, d\xi \, dx' \, dy' \, dt,
\end{align*}
and after making a change of coordinates $\xi \mapsto \lambda \xi$ is
\begin{align*}
	= \frac{\lambda^n}{(2\pi)^{n+1}} \int_{-\infty}^\infty \int_{\R^d} \int_{\R^d} \int_{\R^n} e^{i\lambda(\varphi(x',y',\xi) + t(p(y',\xi) - 1))} \hat \chi(t) q(t,x',y',\lambda \xi) &h(x')h(y')\\
	&\, d\xi \, dx' \, dy' \, dt.
\end{align*}
We introduce a function $\beta \in C_0^\infty(\R)$ with $\beta \equiv 1$ near $0$ and support contained in a small neighborhood of $0$, and cut the integral into $\beta(\log p(y',\xi))$ and $1 - \beta(\log p(y',\xi))$ parts. $|p(y',\xi) - 1|$ is bounded away from $0$ on the support of $1 - \beta(\log p(y',\xi))$, so integrating by parts in $t$ yields
\begin{align*}
	\frac{\lambda^n}{(2\pi)^{n+1}} \int_{-\infty}^\infty &\int_{\R^d} \int_{\R^d} \int_{\R^n} e^{i\lambda(\varphi(x',y',\xi) + t(p(y',\xi) - 1))}\\
	&(1 - \beta(\log p(y',\xi))) \hat \chi(t) q(t,x',y',\lambda \xi) h(x')h(y') \, d\xi \, dx' \, dy' \, dt.\\
	&\hspace{20em} =  O(\lambda^{-N})
\end{align*}
for each $N = 1,2,\ldots$. What is left to bound is the $\beta(\log p(y',\xi))$ part, i.e.
\begin{align} \label{sp amp and phase}
	\lambda^n \int_{-\infty}^\infty \int_{\R^d} \int_{\R^d} \int_{\R^n} e^{i \lambda \Phi(t,x',y',\xi)} a(\lambda; t, x', y', \xi) \, d\xi \, dx' \, dy' \, dt = O(\lambda^{n-d-1})
\end{align}
where we have set the amplitude
\[
	a(\lambda; t,x',y',\xi) = \frac{1}{(2\pi)^{n+1}} \beta(\log|\xi|) \hat \chi(t) q(t,x',y',\lambda \xi) h(x')h(y')
\]
and the phase
\[
	\Phi(t,x',y',\xi) = \varphi(x',y',\xi) + t(p(y',\xi) - 1).
\]
By \eqref{q bounds} and since $a$ has compact support in $t$, $x'$, $y'$, and $\xi$, $a$ and all of its derivatives are uniformly bounded in $\lambda$.

We are now in a position to apply stationary phase. Write $\xi = (\xi', \bar \xi)$ and write $\bar \xi = r \omega$ in polar coordinates with $r \geq 0$ and $\omega \in S^{n-d-1}$. The integral in \eqref{sp amp and phase} is then written
\begin{align*}
	\lambda^n \int_{-\infty}^\infty \int_{\R^d} \int_{\R^d} \int_{\R^d} \int_{S^{n-d-1}} \int_0^\infty e^{i \lambda \Phi(t,x',y',\xi)} &a(\lambda; t, x', y', \xi)\\
	 &r^{n-d-1} \, dr \, d\omega \, d\xi' \, dx' \, dy' \, dt
\end{align*}
We will fix $y'$ and $\omega$ and use the method of stationary phase in the remaining variables $t$, $x'$, $\xi'$, and $r$ (a total of $2d + 2$ dimensions). We assert that, for fixed $y'$ and $\omega$, there is a nondegenerate stationary point at $(t,x',\xi',r) = (0,y',0,1)$. $\Phi = 0$ at such a stationary point, and after perhaps shrinking the support of $a$ we apply ~\cite[Corollary 1.1.8]{FIOs} to write the left hand side of \eqref{sp amp and phase} as
\[
	\lambda^{n-d-1} \int_{\R^d} \int_{S^{n-d-1}} a(\lambda; y', \omega) \, dy' \, d\omega
\]
for some amplitude $a(\lambda;y',\omega)$ uniformly bounded with respect to $\lambda$. \eqref{sp amp and phase} follows.

We have
\begin{align}
	\nonumber \label{nabla phi} \partial_t \Phi &= p(y',\xi) - 1\\
	\nonumber \nabla_{x'} \Phi &= \nabla_{x'} \varphi(x',y',\xi)\\
	\nonumber \nabla_{\xi'} \Phi &= \nabla_{\xi'} \varphi(x',y',\xi) + t \nabla_{\xi'}p(y',\xi)\\
	\nonumber \partial_r \Phi &= \partial_r \varphi(x',y',\xi) + t \partial_r p(y',\xi).
\end{align}
Note for fixed $y'$ and $\omega$, $(t,x',\xi',r) = (0,y',0,1)$ is a critical point of $\Phi$. Now we compute the second derivatives at this point. We immediately see that $\partial_t^2 \Phi$, $\partial_t \nabla_{x'}\Phi$, $\nabla_{\xi'}^2 \Phi$, $\partial_r \nabla_{\xi'} \Phi$, and $\partial_r^2 \Phi$ all vanish. Moreover, $\partial_r \partial_r \Phi = 1$ since $p(y',\xi) = r$ where $\xi' = 0$. By our coordinates \eqref{coordinates} and the fact that $[g^{ij}]_{i,j \leq d}$ is necessarily positive definite,
\[
	p(y',\xi) = \sqrt{\sum_{j,k} g^{jk} \xi_j \xi_k} = \sqrt{r^2 + \sum_{j,k \leq d} g^{jk} \xi_j' \xi_k' } \geq r = p(y',r\omega).
\]
Hence, $\partial_t \nabla_{\xi'} \Phi = \nabla_{\xi'} p(y',\xi) = 0$. Since $\varphi$ is homogeneous of degree $1$ in $\xi$, at $\xi' = 0$ and $t = 0$,
\[
	\nabla_{x'}\partial_r \Phi = \nabla_{x'} \partial_r \varphi(x',y',\xi) = \nabla_{x'} \varphi(x',y',\omega) = 0 
\]
since $\varphi(x',y',\omega) = O(|x' - y'|^2)$ by \eqref{local varphi} and the fact that $\langle x' - y', \omega \rangle = 0$. Finally by \eqref{local varphi},
\[
	\nabla_{\xi'} \varphi(x',y',\xi' + \omega) = x' + O(|x' - y'|^2)
\]
whence at the critical point
\[
	\nabla_{x'} \nabla_{\xi'} \Phi = I,
\]
the $d \times d$ identity matrix. In summary, the Hessian matrix of $\Phi$ at the critical point $(t,x',\xi',r) = (0,y',0,1)$ is
\[
	\nabla_{t,x',\xi',r}^2 \Phi = \left[ \begin{array}{cccc}
		0 & 0 & 0 & 1 \\
		0 & * & I & 0 \\
		0 & I & 0 & 0 \\
		1 & 0 & 0 & 0
	\end{array} \right]
\]
which has full rank.


\newsection{Microlocal tools}

The hypotheses on the looping directions in Theorem \ref{looping theorem} ensure that the wavefront sets of $\mu$ and $e^{it\lap g} \mu$ have minimal intersection for any given $t$. We can then use pseudodifferential operators to break $\mu$ into two parts, the first whose wavefront set is disjoint from that of $e^{it\lap g}\mu$ and the second which contributes a small, controllable term to the bound. The following propositions will allow us to handle these cases, respectively.


\begin{prop} \label{B contribution}
    Let $u$ and $v$ be distributions on $M$ for which
    \[
        \WF(u) \cap \WF(v) = \emptyset.
    \]
    Then
    \[
    	t \mapsto \int_M e^{it\lap g} u(x) \overline{v(x)} \, dx
	\]
	is a smooth function of $t$ on some neighborhood of $0$.
\end{prop}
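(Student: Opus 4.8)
The plan is to regard $U(t,x) := e^{it\lap{g}}u(x)$ as a distribution on $\R\times M$, and to read off the smoothness of $F(t) := \int_M e^{it\lap{g}}u(x)\,\overline{v(x)}\,dx$ from a wavefront set computation. Writing $\pi(t,x) = t$ and $q(t,x) = x$ for the two projections off $\R\times M$, we have $F = \pi_*\big(U\cdot q^*\bar v\big)$, the pushforward along the compact fibers $M$ of the product $W := U\cdot q^*\bar v$. Since $M$ is compact, $\pi$ is proper, so $W\mapsto\pi_*W$ is defined on distributions and obeys $\WF(\pi_*W)\subseteq\{(t,\tau): (t,\tau;x,0)\in\WF(W)\text{ for some }x\in M\}$. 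Thus it suffices to produce $\epsilon>0$ so that $\WF(W)$ contains no covector $(t,\tau;x,0)$ with $\tau\neq0$ and $|t|<\epsilon$.

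First I would invoke the standard theory of the half-wave propagator (e.g. \cite{FIOs}): $U$ is a well-defined distribution on $\R\times M$ -- indeed $t\mapsto U(t,\cdot)$ is a continuous curve valued in some Sobolev space $H^{-s}(M)$ -- solving $(D_t - \lap{g})U = 0$ with $U|_{t=0}=u$, where $D_t = -i\partial_t$. Since the characteristic variety of $D_t - \lap{g}$ is $\{\tau = p(x,\xi)\}$ with $p(x,\xi) = |\xi|_g > 0$, propagation of singularities along the null bicharacteristics (which project to the geodesic flow in the $(x,\xi)$ variables and are graphs over $t$), together with $U|_{t=0}=u$, yields
\[
	\WF(U)\ \subseteq\ \big\{\,(t,\,p(x,\xi);\,x,\xi)\ :\ \xi\neq 0,\ (x,\xi)\in\Phi_{-t}(\WF(u))\,\big\}.
\]
The two features I need are: every covector in $\WF(U)$ has $M$-component $\xi\neq0$, and consequently $\R$-component $\tau = p(x,\xi)\neq0$; and the $M$-directions appearing over time $t$ lie in $\Phi_{-t}(\WF(u))$.

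Next I would verify the product $W = U\cdot q^*\bar v$ is legitimate by Hörmander's criterion. Since $\WF(q^*\bar v)\subseteq\{(t,0;x,\eta):(x,\eta)\in\WF(\bar v)\}$ has vanishing $\tau$-component while $\WF(U)$ has $\tau\neq0$ everywhere, no element of $\WF(U)$ is the negative of an element of $\WF(q^*\bar v)$, so there is no obstruction. By the wavefront-set product rule, a covector $(t,\tau;x,0)\in\WF(W)$ with $\tau\neq0$ can only arise from cancellation of the $M$-components, i.e. there is $\xi\neq0$ with $(t,\tau;x,\xi)\in\WF(U)$ and $(t,0;x,-\xi)\in\WF(q^*\bar v)$. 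The first condition forces $(x,\xi)\in\Phi_{-t}(\WF(u))$; the second, together with $\WF(\bar v) = \{(x,-\eta):(x,\eta)\in\WF(v)\}$, forces $(x,\xi)\in\WF(v)$. Hence $\WF(W)$ has such a covector over time $t$ only if $\Phi_{-t}(\WF(u))\cap\WF(v)\neq\emptyset$.

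Finally I would close the argument by a compactness-and-continuity estimate on the flow. Passing to the cosphere bundle, $\WF(u)\cap S^*M$ and $\WF(v)\cap S^*M$ are disjoint closed subsets of the compact manifold $S^*M$, hence are separated by some $\delta>0$. As $\Phi_t$ is continuous with $\Phi_0 = \mathrm{id}$, compactness of $S^*M$ gives $\epsilon>0$ such that $\operatorname{dist}(\Phi_t(\rho),\rho)<\delta$ for all $\rho\in S^*M$ and all $|t|<\epsilon$; therefore $\Phi_{-t}(\WF(u))\cap\WF(v)=\emptyset$ whenever $|t|<\epsilon$. Combining the last three paragraphs, $\WF(F)\cap(-\epsilon,\epsilon)=\emptyset$, so $F$ is $C^\infty$ near $0$. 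I expect the only delicate point to be the bookkeeping behind the displayed inclusion for $\WF(U)$ -- keeping the sign of the characteristic variety, the direction of the bicharacteristic flow, and the conjugation in $\WF(\bar v)$ mutually consistent -- but this is routine; the real content is simply that singularities of $e^{it\lap{g}}u$ propagate at unit speed and so cannot reach the singular directions of $v$ in short time.
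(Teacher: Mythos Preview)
Your argument is correct but takes a different route from the paper's. The paper works on $M$ rather than on the space-time $\R\times M$: it picks a microlocal partition of unity $I=\sum_j A_j$ by order-zero pseudodifferential operators with small conic essential supports, expands $\int e^{it\lap g}u\,\overline v = \sum_{j,k}\int A_je^{it\lap g}u\,\overline{A_k v}$, and disposes of each term by noting that either $A_j^*A_k$ is smoothing, or $A_kv$ is smooth, or $A_je^{it\lap g}u$ is smooth in $(t,x)$ for $|t|\ll 1$---the last case using exactly the compactness-and-continuity fact about the geodesic flow that you isolate at the end. You instead package the same geometric input through the H\"ormander calculus on the space-time distribution $U$: pull back $\bar v$, form the product, push forward along $\pi$, and read off smoothness from the wavefront-set rules for each operation. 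Your version is more streamlined and conceptual, at the price of importing somewhat heavier off-the-shelf machinery (the product theorem, the pushforward rule, and the FIO description of $\WF(U)$); the paper's version is more hands-on and closer in spirit to how the proposition is actually consumed later, where the operators $b$ and $B$ of Lemma~\ref{b B decomposition} already furnish exactly such a decomposition. The sign ambiguity you flag in $\Phi_{\pm t}$ is indeed harmless, since only $\Phi_0=\mathrm{id}$ and short-time continuity of the flow are used.
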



\begin{proof}
    Using a partition of unity, we write
    \[
        I = \sum_j A_j
    \]
    modulo a smoothing operator where $A_j \in \Psi_{\text{cl}}^0(M)$ with essential supports in small conic neighborhoods. We then write, formally,
    \begin{align*}
        \int e^{it \lap g} u(x) \overline{v(x)} \, dx = \sum_{j,k} \int A_je^{it\lap g}u(x) \overline{A_k v(x)} \, dx.
    \end{align*}
    We are done if for each $i$ and $j$,
    \begin{equation}\label{jk smooth part}
    	\int_M A_j e^{it\lap g} u(x) \overline{A_k v(x)} \, dx \qquad \text{ is smooth for } |t| \ll 1.
    \end{equation}
    If the essential supports of $A_j$ and $A_k$ are disjoint, then $A_j^* A_k$ is a smoothing operator, and so $A_j^* A_k v$ is a smooth function and the contributing term
    \[
        \int u(x) \overline{e^{it\lap g} A_j^* A_k v(x)} \, dx
    \]
    is smooth is $t$. Assume the essential support of $A_j$ are small enough so that for each $j$ there exists a small conic neighborhood $\Gamma_j$ which fully contains the essential support of $A_k$ if it intersects the essential support of $A_j$. We in turn take $\Gamma_j$ small enough so that for each $j$, $\overline{ \Gamma_j}$ either does not intersect $\WF(u)$ or does not intersect $\WF(v)$. In the latter case, $A_k v$ is smooth and we have \eqref{jk smooth part} as before. In the former case,
    \[
        \overline{\Gamma_j} \cap \WF(e^{it\lap g}u) = \emptyset \qquad \text{ for } |t| \ll 1
    \]
    since both sets above are closed and the geodesic flow is continuous. Then $A_j e^{it\lap g} u(x)$ is smooth as a function of $t$ and $x$, and we have \eqref{jk smooth part}.
\end{proof}


The second piece of our argument requires the following generalization of Proposition \ref{standard prop}, modeled after ~\cite[Lemma 5.2.2]{SoggeHang}. In the proof we will come to a point where it seems like we may have to perform a stationary phase argument involving an eight-by-eight Hessian matrix. Instead, we appeal to Proposition \ref{stationary phase tool} in the appendix to break the argument into two steps involving two four-by-four Hessian matrices.


\begin{prop} \label{b contribution}
Let $b(x,\xi)$ be smooth for $\xi \neq 0$ and homogeneous of degree $0$ in the $\xi$ variable. We define $b \in \Psi_{\text{cl}}^0(M)$ by
\[
	b(x,D)f(x) = \frac{1}{(2\pi)^n} \int_{\R^n} \int_{\R^n} e^{i\langle x - y , \xi \rangle} b(x,\xi) \, dy \, d\xi
\]
for $x$, $y$, and $\xi$ expressed locally according to our coordinates \eqref{coordinates}. Then,
\begin{align*}
    &\sum_{\lambda_j \in [\lambda,\lambda+1]} \left| \int_\Sigma b e_j(x) \, d\mu(x) \right|^2\\
    &\qquad \leq C \left( \int_{\R^d} \int_{S^{n-d-1}} |b(x',\omega)|^2 h(x')^2 \, d\omega \, dx' \right) \lambda^{n-d-1} + C_{b}\lambda^{n-d-2}
\end{align*}
where $C$ is a constant independent of $b$ and $\lambda$ and $C_b$ is a constant independent of $\lambda$ but which depends on $b$.
\end{prop}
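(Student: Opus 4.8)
The plan is to retrace the proof of Proposition~\ref{standard prop}, now carrying the operator $b(x,D)$ through each step. After a partition of unity subordinate to a cover of $\supp h$ we may work in a single coordinate patch of the form~\eqref{coordinates}, and, as there, it suffices to bound $\sum_j \chi(\lambda_j-\lambda)\bigl|\int_\Sigma b(x,D)e_j\,d\mu\bigr|^2$ for a fixed $\chi\ge 0$ with $\hat\chi$ supported near $0$ and $\chi\gtrsim 1$ on $[0,1]$. Fourier inversion rewrites this as
\[
	\frac{1}{2\pi}\int_{-\infty}^\infty\int_\Sigma\int_\Sigma \hat\chi(t)e^{-it\lambda}\, K_t(x,y)\, d\mu(x)\,d\mu(y)\,dt ,
\]
where $K_t(x,y)=\sum_j e^{it\lambda_j}\,b(x,D)e_j(x)\,\overline{b(y,D)e_j(y)}$. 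Substituting H\"ormander's parametrix for the half-wave kernel inside $K_t$, writing the two pseudodifferential factors as oscillatory integrals, performing the rescaling $\xi\mapsto\lambda\xi$ together with matching rescalings of the frequency variables internal to the two pseudodifferential operators, and inserting the cutoff $\beta(\log p(y,\xi))$ then reduces matters to estimating a single oscillatory integral with $\lambda$-independent amplitude supported near the diagonal; the complementary $1-\beta$ piece contributes $O_b(\lambda^{-N})$ as in Proposition~\ref{standard prop}.

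As foreshadowed above, rather than attacking this oscillatory integral with a single stationary-phase argument --- whose Hessian would, in the relevant coordinates, be eight-by-eight --- I would use Proposition~\ref{stationary phase tool} to split it into two four-by-four stages. In the first stage one performs a stationary-phase argument that implements the composition of the two pseudodifferential operators with the wave parametrix, carried out in the variables internal to $b(x,D)$ and $\overline{b(y,D)}$: the pertinent Hessian blocks have the nondegenerate form $\left(\begin{smallmatrix}* & -I\\ -I & 0\end{smallmatrix}\right)$, and their critical points place the internal frequencies at $\nabla_x\varphi(x,y,\xi)$ and at $-\nabla_y\varphi(x,y,\xi)-t\nabla_y p(y,\xi)$, each of which equals $\xi+O(|x-y|)+O(t)$ near the diagonal. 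Since $b$ is homogeneous of degree $0$, this stage replaces the amplitude, to leading order, by
\[
	b\bigl(x,\nabla_x\varphi(x,y,\xi)\bigr)\,\overline{b\bigl(y,-\nabla_y\varphi(x,y,\xi)-t\nabla_y p(y,\xi)\bigr)}\, q(t,x',y',\lambda\xi)\, h(x')h(y') ,
\]
with corrections that carry an extra factor $\lambda^{-1}$ and whose implied constants involve only finitely many derivatives of $b$.

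The second stage is the stationary-phase computation carried out in the proof of Proposition~\ref{standard prop}: holding $y'$ and $\omega$ fixed (with $\bar\xi=r\omega$), the phase $\Phi$ there has the nondegenerate critical point $(t,x',\xi',r)=(0,y',0,1)$, at which $\xi=\omega$ and, by~\eqref{local varphi}, $\nabla_x\varphi=\omega$ and $-\nabla_y\varphi=\omega$, while $q(0,x',x',\omega)=1$; so the amplitude from the first stage evaluates there to $|b(x',\omega)|^2 h(x')^2$. Applying~\cite[Corollary 1.1.8]{FIOs} as in Proposition~\ref{standard prop} then produces a leading term
\[
	C\,\lambda^{n-d-1}\int_{\R^d}\int_{S^{n-d-1}}|b(x',\omega)|^2\, h(x')^2\, d\omega\, dx' ,
\]
where $C$ depends only on $\chi$, $n$, $d$, and the universal normalization of the parametrix --- in particular not on $b$. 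The subleading term of this expansion is $O(\lambda^{n-d-2})$ with a $b$-dependent constant; together with the $\lambda^{-1}$ corrections from the first stage (which, after the $\lambda^n$ prefactor and the $\lambda^{-(d+1)}$ stationary-phase gain, are $O(\lambda^{n-d-2})$) and the $O_b(\lambda^{-N})$ errors already discarded, these are absorbed into $C_b\lambda^{n-d-2}$. Since the left-hand side is real and nonnegative, the stated inequality follows for $\lambda$ large, with bounded $\lambda$ absorbed into $C_b$.

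The main obstacle is the first stage: one must check that the two pseudodifferential compositions can be disentangled from the wave parametrix by stationary phase so that the leading amplitude is exactly $|b(x',\omega)|^2 h(x')^2$ times a \emph{universal} constant --- which is what keeps $C$ independent of $b$ --- while every correction genuinely loses a power of $\lambda$ and carries only $b$-dependent constants. Tracking which variables are integrated by stationary phase and in what order, so that the two stages compose correctly, is precisely the role of Proposition~\ref{stationary phase tool}. A minor point is that $b(x,D)$ was specified only in the coordinates~\eqref{coordinates}, but its principal symbol is an invariantly defined function $b(x,\xi)$ on $T^*M$, so the leading term is coordinate-independent and the coordinate-change terms enter only the lower-order, $b$-dependent part.
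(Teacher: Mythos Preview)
Your proposal is correct and follows essentially the same route as the paper: reduce to a single oscillatory integral via Fourier inversion and H\"ormander's parametrix, then invoke Proposition~\ref{stationary phase tool} twice --- first in the variables $(w,z,\eta,\zeta)$ internal to the two pseudodifferential factors (with the nondegenerate block Hessian you describe and critical point forcing $\eta=\nabla_x\varphi$, $\zeta=-\nabla_y\varphi-t\nabla_yp$), then in $(t,x',\xi',r)$ exactly as in Proposition~\ref{standard prop}, so that the leading amplitude becomes $|b(y',\omega)|^2h(y')^2$ at the critical point. Your bookkeeping of which constants are universal and which depend on $b$ is also on target.
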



\begin{proof}
	We may by a partition of unity assume that $b(x,D)$ has small $x$-support. Let $\chi$ be as in the proof of Proposition \ref{standard prop}. It suffices to show
    \begin{align*}
        \sum_{j} \int_\Sigma \int_\Sigma &\chi(\lambda_j - \lambda) b(x,D) e_j(x) \overline{b(y,D) e_j(y)} \, d\mu(x) \, d\mu(y)\\
        &\sim \left( \int_{\R^d} \int_{S^{n-d-1}} |b(y',\omega)|^2 h(y')^2 \, d\omega \, dy' \right) \lambda^{n-d-1} + O_{b}(\lambda^{n-d-2}).
    \end{align*}
    Using the same reduction as in Proposition \ref{standard prop}, the left hand side is
    \begin{equation} \label{b lemma 1}
    	\frac{1}{2\pi} \int_{-\infty}^\infty \int_\Sigma \int_\Sigma \hat \chi(t) e^{-it\lambda} b e^{it\lap g}b^* (x,y) \, d\mu(x) \, d\mu(y) \, dt.
    \end{equation}
    Set $\beta \in C_0^\infty(\R)$ with small support and where $\beta \equiv 1$ near $0$. Then,
    \begin{align}
    	\nonumber \int_{\R^d} &b(x',D)f(x')h(x') \, dx'\\
		\nonumber &= \frac{\lambda^n}{(2\pi)^n} \int_{\R^n} \int_{\R^n} \int_{\R^d} e^{i \lambda \langle x' - w, \eta \rangle} b(x',\eta) f(w) h(x') \, dx' \, dw \, d\eta\\
		\label{b} &= \frac{\lambda^n}{(2\pi)^n} \int_{\R^n} \int_{\R^n} \int_{\R^d} e^{i \lambda \langle x' - w, \eta \rangle} \beta(\log|\eta|) b(x',\eta) f(w) h(x') \, dx' \, dw \, d\eta\\
		\nonumber & \hspace{24em} + O(\lambda^{-N}),
    \end{align}
    where the second line is obtained by a change of variables $\eta \mapsto \lambda \eta$, and the third line is obtained after multiplying in the cutoff $\beta(\log|\eta|)$ and bounding the discrepancy by $O(\lambda^{-N})$ by integrating by parts in $x'$. Additionally,
    \begin{align}
    	\nonumber b^*(z,D)d\mu(z) &= \frac{1}{(2\pi)^n} \int_{\R^n} \int_{\R^d} e^{i\langle z - y', \zeta \rangle} b(y',\zeta) h(y') \, dy' \, d\zeta \\
		\nonumber &= \frac{\lambda^n}{(2\pi)^n} \int_{\R^n} \int_{\R^d} e^{i \lambda \langle z - y', \zeta \rangle} b(y',\zeta) h(y') \, dy' \, d\zeta \\
		\nonumber &= \frac{\lambda^n}{(2\pi)^n} \int_{\R^n} \int_{\R^d} e^{i \lambda \langle z - y', \zeta \rangle} \beta(\log|\zeta|) b(y',\zeta) h(y') \, dy' \, d\zeta + O(\lambda^{-N})\\
		\label{b*} &= \frac{\lambda^n}{(2\pi)^n} \int_{\R^n} \int_{\R^d} e^{i \lambda \langle z - y', \zeta \rangle} \beta(\log|\zeta|) \beta(|z - y'|) b(y',\zeta) h(y') \, dy' \, d\zeta\\
		\nonumber &\hspace{24em}+ O(\lambda^{-N}),
    \end{align}
    where the second and third lines are obtained similarly as before and the fourth line is obtained after multiplying by $\beta(\log|z - y'|)$ and integrating the remainder by parts in $\zeta$.
    Using H\"ormander's parametrix,
    \begin{align}
    	\nonumber \frac{1}{2\pi} &\int_{-\infty}^\infty \hat \chi(t) e^{-it\lambda} e^{it\lap g}(w,z) \, dt \\
		\nonumber &= \frac{1}{(2\pi)^{n+1}} \int_{-\infty}^\infty \int_{\R^n} e^{i(\varphi(w,z,\xi) + tp(z,\xi) - t\lambda)} \hat \chi(t) q(t,w,z,\xi) \, d\xi \, dt\\
		\nonumber &= \frac{\lambda^n}{(2\pi)^{n+1}} \int_{-\infty}^\infty \int_{\R^n} e^{i\lambda(\varphi(w,z,\xi) + t(p(z,\xi) - 1))} \hat \chi(t) q(t,w,z,\lambda \xi) \, d\xi \, dt\\
		\label{q} &= \frac{\lambda^n}{(2\pi)^{n+1}} \int_{-\infty}^\infty \int_{\R^n} e^{i\lambda(\varphi(w,z,\xi) + t(p(z,\xi) - 1))} \beta(\log p(z,\xi)) \hat \chi(t) q(t,w,z,\lambda \xi) \, d\xi\\
		\nonumber &\hspace{26em} + O(\lambda^{-N}).
    \end{align}
    Here the third line comes from a change of coordinates $\xi \mapsto \lambda \xi$. The fourth line follows after applying the cutoff $\beta(\log p(z,\zeta))$ and integrating the discrepancy by parts in $t$. Combining \eqref{b}, \eqref{b*}, and \eqref{q}, we write \eqref{b lemma 1} as
	\begin{align}
    \label{pre stationary phase}	\lambda^{3n} \int \cdots \int e^{i\Phi(t,x',y',w,z,\eta,\zeta,\xi)} &a(\lambda;t,x',y',w,z,\eta,\zeta,\xi)\\
	\nonumber &dx' \, dy' \, dw \, dz \, d\eta \, d\zeta \, d\xi + O(\lambda^{-N})
    \end{align}
    with amplitude
    \begin{align*}
    	a(\lambda;t,x',&y',w,z,\eta,\zeta,\xi) = \frac{1}{(2\pi)^{3n+1}} \hat \chi(t) q(t,w,z,\lambda \xi) \beta(\log p(z,\xi))\beta(\log |\eta|) \\
		&\hspace{12em} \beta(\log|\zeta|)\beta(|z - y'|) b(x',\eta) b(y',\zeta) h(x') h(y')
    \end{align*}
    and phase
    \[
    	\Phi(t,x',y',w,z,\eta,\zeta,\xi) = \langle x' - w, \eta \rangle + \varphi(w,z,\xi) + t(p(z,\xi) - 1) + \langle z - y', \zeta \rangle.
    \]
    We pause here to make a couple observations. First, $a$ has compact support in all variables, support which we may adjust to be smaller by controlling the supports of $\hat \chi$, $\beta$, $b$, and the support of $q$ near the diagonal. Second, the derivatives of $a$ are bounded independently of $\lambda \geq 1$. We are now in a position to use the method of stationary phase -- not in all variables at once, though. First, we fix $t$, $x'$, $y'$ and $\xi$, and use stationary phase in $w$, $z$, $\eta$, and $\zeta$. We have
    \begin{align*}
    	\nabla_w \Phi &= - \eta + \nabla_w \varphi(w,z,\xi) \\
		\nabla_z \Phi &= \nabla_z \varphi(w,z,\xi) + t \nabla_z p(z,\xi) + \zeta \\
		\nabla_\eta \Phi &= x' - w \\
		\nabla_\zeta \Phi &= z - y'
    \end{align*}
    which all simultaneously vanishes if and only if
	\begin{equation}\label{constraints}
		(w,z,\eta,\zeta) = (x',y', \nabla_x \varphi(x',y',\xi), -\nabla_y \varphi(x',y',\xi) - t\nabla_y p(y',\xi)).
	\end{equation}
	At such a critical point we have the Hessian matrix
	\[
		\nabla_{w,z,\eta,\zeta}^2 \Phi = \left[ \begin{array}{cccc}
			* & * & -I & 0 \\
			* & * & 0 & I \\
			-I & 0 & 0 & 0 \\
			0 & I & 0 & 0
		\end{array} \right],
	\]
	which has determinant $-1$. By Proposition \ref{stationary phase tool} in the appendix, \eqref{pre stationary phase} is equal to complex constant times
	\begin{align*}
		\lambda^n &\int_{-\infty}^\infty \int_{\R^n} \int_{\R^d} \int_{\R^d} e^{i\lambda \Phi(t,x',y',\xi)} a(\lambda; t, x', y', \xi) \, dx' \, dy' \, d\xi' \, dt\\
		&+ \lambda^{n-1} \int_{-\infty}^\infty \int_{\R^n} \int_{\R^d} \int_{\R^d} e^{i\lambda \Phi(t,x',y',\xi)} R(\lambda; t, x', y', \xi) \, dx' \, dy' \, d\xi' \, dt + O(\lambda^{-N})
	\end{align*}
	where we have phase
	\[
		\Phi(t,x',y',\xi) = \varphi(x',y',\xi) + t(p(y',\xi) - 1),
	\]
	amplitude
	\begin{align*}
		&a(\lambda; t, x', y', \xi) = a(\lambda;t,x',y',w,z,\eta,\zeta,\xi)
	\end{align*}
	with $w,z,\eta,$ and $\zeta$ subject to the constraints \eqref{constraints}, and where $R$ is a compactly supported smooth function in $t,x',y'$, and $\xi$ whose derivatives are bounded uniformly with respect to $\lambda$. Our phase function matches that in the proof of Proposition \ref{standard prop}, and so we repeat that argument -- we write $\bar \xi = r \omega$ and fix $y'$ and $\omega$. We obtain unique nondegenerate stationary points
	\[
		(t,x',\xi',r) = (0,y',0,1).
	\]
	Now,
	\[
		a(\lambda;0,y',y', \omega) \sim |b(y',\omega)|^2 h(y')^2.
	\]
	Hence, we have
	\begin{align*}
		\lambda^n \int_{-\infty}^\infty &\int_{\R^n} \int_{\R^d} \int_{\R^d} e^{i\lambda \Phi(t,x',y',\xi)} a(\lambda; t, x', y', \xi) \, dx' \, dy' \, d\xi' \, dt\\
		&\sim \lambda^{n-d-1} \left( \int_{\R^d} \int_{S^{n-d-1}} b(y',\omega)^2 h(y')^2 \, d\omega \, dy' \right) + O(\lambda^{n-d-2})
	\end{align*}
	by Proposition \ref{stationary phase tool} as desired. The same argument applied to the remainder term gives
	\[
		\lambda^{n-1} \int_{-\infty}^\infty \int_{\R^n} \int_{\R^d} \int_{\R^d} e^{i\lambda \Phi(t,x',y',\xi)} R(\lambda; t, x', y', \xi) \, dx' \, dy' \, d\xi' \, dt = O(\lambda^{n-d-2})
	\]
	as desired.
\end{proof}


\newsection{Proof of Theorem \ref{looping theorem}}

Theorem \ref{looping theorem} follows from the following stronger statement.

\begin{prop} \label{looping prop}
Given the hypotheses of Theorem \ref{looping theorem}, we have
\[
	\sum_{\lambda_j \in [\lambda, \lambda + \epsilon]} \left| \int_\Sigma e_\lambda \, d\mu \right|^2 \leq C\epsilon \lambda^{n-d-1} + C_\epsilon \lambda^{n-d-2},
\]
where $C$ is a constant independent of $\epsilon$ and $\lambda$, and $C_\epsilon$ is a constant depending on $\epsilon$ but not $\lambda$.
\end{prop}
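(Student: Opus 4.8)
The plan is to decompose $\mu$ into two microlocal pieces adapted to the looping directions, and to handle each piece with one of the two propositions of Section 3. Fix a small time $\delta>0$ (below the injectivity radius) and set $T = 1/\epsilon$. Let
\[
	\Loop_\Sigma^T = \{(x,\xi) \in SN^*\Sigma : \Phi_t(x,\xi) \in SN^*\Sigma \text{ for some } t \in [\delta/2, T]\},
\]
which is a closed set (using that $SN^*\Sigma$ is compact, since $h$ has compact support) contained in $\Loop_\Sigma$, hence of measure zero. By outer regularity together with a smooth Urysohn function, I would choose a real, degree-zero symbol $b(x,\xi)$ with $0 \le b \le 1$, equal to $1$ on a conic neighborhood of $\Loop_\Sigma^T$, and supported tightly enough that
\[
	\int_{\R^d} \int_{S^{n-d-1}} |b(x',\omega)|^2 h(x')^2 \, d\omega \, dx' < \epsilon.
\]
Put $B = b(x,D)$ and $A = I - B$, so $A = a(x,D)$ modulo smoothing with $a = 1-b$ vanishing near $\Loop_\Sigma^T$. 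Since $\int_\Sigma e_j \, d\mu = \int_\Sigma A e_j \, d\mu + \int_\Sigma B e_j \, d\mu$ up to an $O(\lambda_j^{-N})$ error from the smoothing remainder, and since there are $O(\lambda^{n-1})$ eigenvalues in $[\lambda, \lambda+\epsilon]$, it suffices to bound the sums over $\lambda_j \in [\lambda, \lambda+\epsilon]$ of $|\int_\Sigma A e_j \, d\mu|^2$ and of $|\int_\Sigma B e_j \, d\mu|^2$ by the right-hand side.

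The $B$-term is immediate from Proposition \ref{b contribution}: because the summands are nonnegative and $[\lambda,\lambda+\epsilon]\subset[\lambda,\lambda+1]$,
\[
	\sum_{\lambda_j \in [\lambda, \lambda+\epsilon]} \left| \int_\Sigma B e_j \, d\mu \right|^2 \le \sum_{\lambda_j \in [\lambda, \lambda+1]} \left| \int_\Sigma b e_j \, d\mu \right|^2 \le C \epsilon \lambda^{n-d-1} + C_b \lambda^{n-d-2},
\]
and $C_b$ depends only on $\epsilon$ since $b$ does.

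For the $A$-term I would use a rescaled cutoff. Let $\chi_0 \ge 0$ be fixed with $\chi_0 \ge 1$ on $[0,1]$ and $\hat\chi_0$ supported in $(-1,1)$, and set $\chi_\epsilon(s) = \chi_0(s/\epsilon)$, so $\hat\chi_\epsilon(t) = \epsilon\, \hat\chi_0(\epsilon t)$ is supported in $|t| < T$ with $\hat\chi_\epsilon(0) = \epsilon\,\hat\chi_0(0)$. By positivity it suffices to bound $\sum_j \chi_\epsilon(\lambda_j - \lambda) |\int_\Sigma A e_j \, d\mu|^2$, which by Fourier inversion equals $\frac{1}{2\pi} \int \hat\chi_\epsilon(t) e^{-it\lambda} \langle e^{it\lap g} A^*\mu, A^*\mu \rangle \, dt$. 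Split the $t$-integral with $\rho \in C_0^\infty$, $\rho \equiv 1$ on a neighborhood of $[-\delta/2,\delta/2]$ and $\supp\rho \subset (-\delta,\delta)$. On the support of $\hat\chi_\epsilon(t)(1-\rho(t))$, i.e. for $\delta/2 \le |t| \le T$, we have $\WF(A^*\mu) \subseteq N^*\Sigma$ with $\WF(A^*\mu) \cap \Loop_\Sigma^T = \emptyset$ (as $a$ vanishes near $\Loop_\Sigma^T$), so $\Phi_t(\WF(A^*\mu)) \cap N^*\Sigma = \emptyset$ for such $t>0$, and likewise for $t<0$ by taking complex conjugates since $\langle e^{-it\lap g}A^*\mu,A^*\mu\rangle = \overline{\langle e^{it\lap g}A^*\mu,A^*\mu\rangle}$. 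Applying the argument of Proposition \ref{B contribution} (with $e^{it_0\lap g}A^*\mu$ in place of $u$ and $A^*\mu$ in place of $v$) shows $t \mapsto \langle e^{it\lap g}A^*\mu, A^*\mu\rangle$ is smooth for $\delta/2 \le |t| \le T$; hence this piece is a compactly supported smooth function of $t$ paired with $e^{-it\lambda}$, so it is $O_\epsilon(\lambda^{-N})$. The remaining piece, with $\hat\chi_\epsilon(t)\rho(t)$ supported in $|t|<\delta$, has exactly the form treated in the proof of Proposition \ref{b contribution}, with $a$ in place of $b$ and $\hat\chi_\epsilon(t)\rho(t)$ in place of $\hat\chi(t)$: inserting H\"ormander's parametrix, rescaling $\xi \mapsto \lambda\xi$, and carrying out the two-step stationary phase argument via Proposition \ref{stationary phase tool} yields a leading term of size $\lambda^{n-d-1}\hat\chi_\epsilon(0)\int_{\R^d}\int_{S^{n-d-1}} a(x',\omega)^2 h(x')^2 \, d\omega\, dx'$ plus an $O_\epsilon(\lambda^{n-d-2})$ remainder. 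Since $\hat\chi_\epsilon(0) = \epsilon\,\hat\chi_0(0)$ and $0 \le a \le 1$, this is at most $C\epsilon\lambda^{n-d-1} + C_\epsilon\lambda^{n-d-2}$. Adding the two contributions and absorbing the rapidly decaying errors gives the proposition.

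The step I expect to be the main obstacle is the long-time estimate for the $A$-term: one must verify that $A^*$ genuinely strips away every conormal direction along which a geodesic returns conormally to $\Sigma$ within time $T$ — this is exactly where the measure-zero hypothesis on $\Loop_\Sigma$ enters, together with the reality of $\mu$ (which makes the $t<0$ case follow by conjugation) — and that the resulting function of $t$ is smooth uniformly over the whole interval $\delta/2 \le |t| \le 1/\epsilon$, whose length grows as $\epsilon \to 0$; this forces the error constant to depend on $\epsilon$, which is acceptable since $\epsilon$ is held fixed. A more routine point to check is that the short-time stationary phase computation of Proposition \ref{b contribution} is unaffected by replacing its cutoff $\hat\chi(t)$ by $\hat\chi_\epsilon(t)\rho(t)$, so that the leading constant correctly picks up the factor $\hat\chi_\epsilon(0)$, which is of size $\epsilon$.
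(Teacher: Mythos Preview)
Your proposal is correct and follows essentially the same strategy as the paper: a microlocal decomposition of $\mu$ into a piece supported near $\Loop_\Sigma^T$ (handled by Proposition~\ref{b contribution}) and a complementary piece whose long-time propagation misses $N^*\Sigma$ (handled by Proposition~\ref{B contribution}), with the time cutoff scaled by $T=1/\epsilon$. The only organizational difference is that the paper splits in time first and applies the $b/B$ decomposition only to the long-time kernel (so the $\epsilon$ in the leading term comes from the global $1/T$ factor, and the $b$ piece is made small enough to beat a $T$-dependent constant), whereas you split microlocally first and extract the $\epsilon$ for the $A$-piece from $\hat\chi_\epsilon(0)$ in the short-time stationary phase; both routes use the same ingredients and yield the same bound.
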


We make a few convenient assumptions. First, we take the injectivity radius of $M$ to be at least $1$ by scaling the metric $g$. Second, we assume the support of $d\mu$ has diameter less than $1/2$ by a partition of unity. We reserve the right to further scale the metric $g$ and restrict the support of $d\mu$ as needed, finitely many times.

As before, we set $\chi \in C^\infty(\R)$ with $\chi(0) = 1$, $\chi \geq 0$, and $\supp \hat \chi \subset [-1,1]$. It suffices to show
\[
	\sum_j \chi(T(\lambda_j - \lambda)) \left| \int_\Sigma e_\lambda \, d\mu \right|^2 \leq CT^{-1} \lambda^{n-d-1} + C_T \lambda^{n-d-2}
\]
for $T > 1$. Similar to the reduction in the proof of Proposition \ref{standard prop}, we have
\begin{align*}
	\sum_j \int_\Sigma \int_\Sigma &\chi(T(\lambda_j - \lambda)) e_j(x) \overline{e_j(y)} \, d\mu(x) \, d\mu(y)\\
	&= \frac{1}{2\pi} \sum_j \int_{-\infty}^\infty \int_\Sigma \int_\Sigma \hat \chi(t) e^{itT(\lambda_j - \lambda)} e_j(x) \overline{e_j(y)} \, d\mu(x) \, d\mu(y) \, dt\\
	&= \frac{1}{2\pi T} \sum_j \int_{-\infty}^\infty \int_\Sigma \int_\Sigma \hat \chi(t/T) e^{-it\lambda} e^{it\lambda_j} e_j(x) \overline{e_j(y)} \, d\mu(x) \, d\mu(y) \, dt\\
	&= \frac{1}{2\pi T} \int_{-\infty}^\infty \int_\Sigma \int_\Sigma \hat \chi(t/T) e^{-it\lambda} e^{it\lap g}(x,y) \, d\mu(x) \, d\mu(y) \, dt.
\end{align*}
Hence, it suffices to show
\begin{align}
	\label{lp bound 1} &\left| \int_{-\infty}^\infty \int_\Sigma \int_\Sigma \hat \chi(t/T) e^{-it\lambda} e^{it\lap g}(x,y) \, d\mu(x) \, d\mu(y) \, dt \right|\\
	\nonumber &\hspace{16em} \leq C \lambda^{n-d-1} + C_T \lambda^{n-d-2}.
\end{align}
Set $\beta \in C_0^\infty(\R)$ with $\beta(t) \equiv 1$ near $0$ and $\beta$. We cut the integral in \eqref{lp bound 1} into $\beta(t)$ and $1 - \beta(t)$ parts. Since $\beta(t)\hat \chi(t/T)$ and its derivatives are all bounded independently of $T \geq 1$,
\[
	\left| \int_{-\infty}^\infty \int_\Sigma \int_\Sigma \beta(t) \hat \chi(t/T) e^{-it\lambda} e^{it\lap g}(x,y) \, d\mu(x) \, d\mu(y) \, dt \right| \leq C \lambda^{n-d-1}
\]
by the proof of Proposition \ref{standard prop}. Hence, it suffices to show
\begin{align}
	\label{lp bound 2} &\left| \int_{-\infty}^\infty \int_\Sigma \int_\Sigma (1 - \beta(t)) \hat \chi(t/T) e^{-it\lambda} e^{it\lap g}(x,y) \, d\mu(x) \, d\mu(y) \, dt \right|\\
	\nonumber &\hspace{20em} \leq C \lambda^{n-d-1} + C_T \lambda^{n-d-2}.
\end{align}
Here we shrink the support of $\mu$ so that $\beta(d_g(x,y)) = 1$ for $x,y\in \supp \mu$. We now state and prove a useful decomposition based off of those in ~\cite{SZmanifolds}, ~\cite{STZ}, and Chapter 5 of ~\cite{SoggeHang}. We let $\Loop_\Sigma(\supp \mu,T)$ denote the subset of $\Loop_\Sigma$ relevant to the support of $\mu$ and the timespan $[1,T]$, specifically
\begin{align*}
	\Loop_\Sigma(\supp \mu,T) &= \{ (x,\xi) \in SN^*\Sigma : \Phi_t(x,\xi) = (y,\eta) \in SN^*\Sigma\\
	&\hspace{4em} \text{ for some } t \in [1,T] \text{ and where } x,y \in \supp \mu \}.
\end{align*}


\begin{lem} \label{b B decomposition}
Fix $T > 1$ and $\epsilon > 0$. There exist $b, B \in \Psi_{\text{cl}}^0(M)$ supported on a neighborhood of $\supp \mu$ with the following properties.
\begin{enumerate}
	\item $b(x,D) + B(x,D) = I$ modulo a smoothing operator on $\supp \mu$.
	\item Using coordinates \eqref{coordinates},
	\[
		\int_{\R^d} \int_{S^{n-d-1}} |b(x',\omega)|^2 \, d\omega \, dx' < \epsilon,
	\]
	where $b(x,\xi)$ is the principal symbol of $b(x,D)$.
	\item The essential support of $B(x,D)$ contains no elements of $\Loop_\Sigma(\supp \mu,T)$.
\end{enumerate}
\end{lem}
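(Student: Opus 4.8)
\textbf{Proof proposal for Lemma \ref{b B decomposition}.}

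The plan is to build $b(x,D)$ as a microlocal cutoff to a small neighborhood of the looping set $\Loop_\Sigma(\supp\mu,T)$ inside $S^*M$, restricted to a neighborhood of $\supp\mu$, and to set $B(x,D)$ to be its complement. First I would observe that $\Loop_\Sigma(\supp\mu,T)$ is a closed subset of $SN^*\Sigma$ (by continuity of $\Phi_t$, compactness of $[1,T]$, and since $SN^*\Sigma$ over $\supp\mu$ is compact): if $(x_k,\xi_k)\to(x,\xi)$ and $\Phi_{t_k}(x_k,\xi_k)\in SN^*\Sigma$ with $t_k\in[1,T]$, pass to a convergent subsequence $t_k\to t\in[1,T]$ and use that $SN^*\Sigma$ over $\supp\mu$ is closed. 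So I get a genuine closed set of measure zero in $SN^*\Sigma$. The key point is that I want a cutoff whose essential support on $S^*M$ avoids this set; because $SN^*\Sigma$ is a submanifold of $S^*M$ of positive codimension (codimension $d$), and because the looping set is measure zero \emph{within} $SN^*\Sigma$, I can find an open conic neighborhood $\mathcal{U}$ of $\Loop_\Sigma(\supp\mu,T)$ in $S^*M$ whose intersection with $SN^*\Sigma$ (in coordinates \eqref{coordinates}, this is the set $\bar x = 0$, $\xi' = 0$, $|\bar\xi|=1$) has $d\omega\,dx'$-measure less than $\epsilon$. Shrinking $\mathcal{U}$ only shrinks this measure, so the smallness in item (2) is at my disposal.

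Next I would take $b(x,\xi)$ to be a symbol homogeneous of degree $0$ in $\xi$, supported in $\mathcal{U}$, equal to $1$ on a smaller conic neighborhood $\mathcal{V}$ of $\Loop_\Sigma(\supp\mu,T)$, and cut off in $x$ to a small neighborhood of $\supp\mu$ where coordinates \eqref{coordinates} are valid; then define $b(x,D)$ by the formula in the statement of Proposition \ref{b contribution}, and set $B(x,D) = \psi(x)I - b(x,D)$ where $\psi\equiv 1$ on $\supp\mu$ is a spatial cutoff supported in the coordinate patch. This gives item (1) immediately modulo smoothing. For item (2), the principal symbol of $b(x,D)$ is exactly $b(x,\xi)$, and $|b(x',\omega)|\le \mathbf{1}_{\mathcal U\cap SN^*\Sigma}(x',\omega)$ up to the spatial cutoff, so the integral is at most the measure of $\mathcal U\cap SN^*\Sigma$, which I arranged to be $<\epsilon$. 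For item (3): $B(x,D)$ has essential support inside $\{\psi = 1\}\times S^*M$ minus the set where $b = 1$, i.e. avoiding $\mathcal V \supset \Loop_\Sigma(\supp\mu,T)$, so its essential support contains no looping directions; I should make $\mathcal V$ an open neighborhood so that ``contains no elements'' is clean.

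The main obstacle is the geometry underlying the first paragraph: justifying that a measure-zero subset of the submanifold $SN^*\Sigma$ can be thickened to an open set in $S^*M$ that still meets $SN^*\Sigma$ in small measure, \emph{while} simultaneously being conic and avoiding the issue that $b(x,D)$ must be a well-defined classical pseudodifferential operator with the stated symbol class. Concretely I would cover the compact closed set $\Loop_\Sigma(\supp\mu,T)$ by finitely many coordinate-chart pieces, in each chart use outer regularity of Lebesgue measure on $SN^*\Sigma$ to trap it in a relatively open subset $W$ of $SN^*\Sigma$ of measure $<\epsilon/N$, then take $\mathcal U$ to be a conic tube around $W$ in $S^*M$ of small transverse radius; the transverse radius does not affect the $SN^*\Sigma$-measure at all since that measure only sees the $\bar x = 0,\ \xi'=0$ slice, so the bound $\int\int|b(x',\omega)|^2\,d\omega\,dx' < \epsilon$ survives. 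A secondary technical point is ensuring $b + B = I$ holds \emph{exactly} on $\supp\mu$ modulo smoothing despite the spatial cutoffs, which is why I include $\psi\equiv 1$ on $\supp\mu$; the commutator and symbol-calculus errors are lower order and smoothing respectively, so they are harmless.
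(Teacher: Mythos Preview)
Your argument is correct and follows essentially the same route as the paper: show $\Loop_\Sigma(\supp\mu,T)$ is closed via compactness of $[1,T]$ and continuity of the flow, trap it in an open set of small $SN^*\Sigma$--measure, build $b$ as a degree--zero cutoff equal to $1$ near the looping set, and put $B=\psi I-b$. The paper constructs the cutoff $\tilde b$ directly on $SN^*\Sigma$ and then plugs $\tilde b(x,\xi/|\xi|)$ into the quantization without comment, whereas you are more explicit about thickening to a conic tube in $S^*M$; this extra care is harmless and arguably cleaner, but the two proofs are the same in substance.
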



\begin{proof}
As shorthand, we write
\[
	SN^*_{\supp \mu} \Sigma = \{ (x,\xi) \in SN^* \Sigma : x \in \supp \mu \}.
\]
We first argue that $\Loop_\Sigma(\supp \mu, T)$ is closed for each $T > 1$. However, $\Loop_\Sigma(\supp \mu, T)$ is the projection of the set
\begin{align} \label{t,x,xi}
	\{ (t,x,\xi) \in [1,T] \times SN^*_{\supp \mu} \Sigma : \Phi_t(x,\xi) \in SN^*_{\supp \mu} \Sigma \}
\end{align}
onto $SN^*_{\supp \mu}\Sigma$, and since $[1,T]$ is compact it suffices to show that \eqref{t,x,xi} is closed. However, \eqref{t,x,xi} is the intersection of $[1,T] \times SN^*_{\supp \mu}\Sigma$ with the preimage of $SN^*_{\supp \mu} \Sigma$ under the continuous map
\[
	(t,x,\xi) \mapsto \Phi_t(x,\xi).
\]
Since $SN^*_{\supp \mu}\Sigma$ is closed, \eqref{t,x,xi} is closed.

Since $\Loop_\Sigma(\supp \mu, T)$ is closed and has measure zero, there is $\tilde b \in C^\infty(SN^* \Sigma)$ supported on a neighborhood of $SN^*_{\supp \mu}\Sigma$ with $0 \leq \tilde b(x,\xi) \leq 1$, $\tilde b(x,\xi) \equiv 1$ on an open neighborhood of $\Loop_\Sigma(\supp \mu,T)$, and
\[
	\int_{\R^d}\int_{S^{n-d-1}} |b(x',\omega)|^2 \, d\omega \, dx' < \epsilon.
\]
We use the coordinates in \eqref{coordinates} and define
\[
	b(x,D)f(x) = \frac{1}{(2\pi)^n} \int_{\R^n} \int_{\R^n} e^{i\langle x - y, \xi\rangle} \tilde b(x,\xi/|\xi|) f(y) \, dy \, d\xi,
\]
hence (2). We set $\psi \in C_0^\infty(\Sigma)$ to be a cutoff function supported on a neighborhood of $\supp \mu$ with $\psi \equiv 1$ on $\supp \mu$. Defining
\[
	B(x,D)f(x) = \frac{1}{(2\pi)^n} \int_{\R^n} \int_{\R^n} e^{i\langle x - y, \xi\rangle} \psi(x)(1 - \tilde b(x,\xi/|\xi|)) f(y) \, dy \, d\xi
\]
yields (1). We have (3) since the support of $1 - \tilde b(x,\xi)$ contains no elements of $\Loop_\Sigma(\supp \mu, T)$.
\end{proof}


Returning to the proof of Proposition \ref{looping prop}, let $X_{T}$ denote the function with
\[
	\hat X_{T}(t) = (1 - \beta(t)) \hat \chi(t/T),
\]
and let $X_{T,\lambda}$ denote the operator with kernel
\[
	X_{T,\lambda}(x,y) = \frac{1}{2\pi} \int_{-\infty}^\infty \hat X_T(t) e^{-it\lambda} e^{it\lap g}(x,y) \, dt.
\]
We use part (1) of Lemma \ref{b B decomposition} to write the integral in \eqref{lp bound 2} as
\begin{align*}
	\int_\Sigma \int_\Sigma X_{T,\lambda}(x,y) \, d\mu(y) \, d\mu(x)
	=& \int_\Sigma \int_\Sigma B X_{T,\lambda} B^* (x,y) \, d\mu(y) \, d\mu(x) \\
	&+ \int_\Sigma \int_\Sigma B X_{T,\lambda} b^* (x,y) \, d\mu(y) \, d\mu(x) \\
	&+ \int_\Sigma \int_\Sigma b X_{T,\lambda} B^* (x,y) \, d\mu(y) \, d\mu(x) \\
	&+ \int_\Sigma \int_\Sigma b X_{T,\lambda} b^* (x,y) \, d\mu(y) \, d\mu(x).
\end{align*}
We claim the first three terms on the right are $O_T(\lambda^{-N})$ for $N = 1,2,\ldots$. We will only prove this for the first term -- the argument is the same for the second term and the bound for the third term follows since $X_{T,\lambda}$ is self-adjoint. Interpreting $\mu$ as a distribution on $M$, we write formally
\begin{align}
	\nonumber \int_\Sigma \int_\Sigma &B X_{T,\lambda} B^* (x,y) \, d\mu(y) \, d\mu(x) \\
	\nonumber &= \int_M \int_M X_{T,\lambda}(x,y) B^*\mu(y) \overline{B^*\mu(x)} \, dx \, dy\\
	\label{integral over M} &= \frac{1}{2\pi}\int_{-\infty}^\infty \hat X_{T}(t) e^{-it\lambda} \int_M e^{it\lap g}(B^* \mu)(x) \overline{B^* \mu(x)} \, dx \, dt
\end{align}
Once we show
\begin{equation} \label{wavefronts}
	\WF(e^{it\lap g}B^*\mu) \cap B^*\mu = \emptyset \qquad \text{ for all } t \in \supp \hat X_T,
\end{equation}
the integral over $M$ will be smooth in $t$ by Proposition \ref{B contribution}. Integration by parts in $t$ then gives the desired bound of $O_T(\lambda^{-N})$. To prove \eqref{wavefronts}, suppose $(x,\xi) \in \WF(B^*\mu)$. By part (3) of Lemma \ref{b B decomposition}, $\Phi_t(x,\xi)$ is not in $SN^*_{\supp \mu}\Sigma$ for any $1 \leq |t| \leq T$. By propagation or singularities,
\[
	\WF(e^{it\lap g} B^*\mu) = \Phi_t \WF(B^*\mu),
\]
hence
\begin{equation} \label{wavefronts 2}
	\WF(e^{it\lap g} B^*\mu) \cap \WF(B^* \mu) = \emptyset \qquad \text{ for } 1 \leq |t| \leq T.
\end{equation}
Since the support of $\mu$ has been made small, if there is $(x,\xi) \in SN_{\supp \mu}^*\Sigma$ and some $t > 0$ in the support of $(1 - \beta(t)) \hat \chi(t/T)$ for which $\Phi_t(x,\xi) \in SN_{\supp \mu}^* \Sigma$, then $t \geq 1$ since the diameter of $\supp \mu$ is small and the injectivity radius of $M$ is at least $1$. We now have \eqref{wavefronts}, from which follows \eqref{integral over M} as promised.

What remains is to bound
\begin{equation} \label{b bound}
\left| \int_\Sigma \int_\Sigma b X_{T,\lambda}b^*(x,y) \, d\mu(x) \, d\mu(y) \right| \leq \lambda^{n-d-1} + C_{T,b} \lambda^{n-d-2}.
\end{equation}
We have
\[
	b X_{T,\lambda}b^*(x,y) = \sum_j X_{T}(\lambda_j - \lambda) b e_j(x) \overline{b e_j(y)},
\]
and so we write the integral in \eqref{b bound} as
\begin{align} \label{final bound}
	\sum_j X_T(\lambda_j - \lambda) \left| \int_\Sigma b(x,D)e_j(x) \, d\mu(x) \right|^2.
\end{align}
By the bounds
\[
	|X_T(\tau)| \leq C_{T,N}(1 + |\tau|)^{-N} \qquad \text{ for } N = 1,2,\ldots
\]
and Proposition \ref{b contribution},
\begin{align*}
	&\left| \sum_j X_T(\lambda_j - \lambda) \left| \int_\Sigma b(x,D)e_j(x) \, d\mu(x) \right|^2 \right|\\
	&\hspace{4em} \leq C_T  \lambda^{n-d-1} \int_{\R^d} \int_{S^{n-d-1}} |b(x',\omega)|^2 h(x')^2 \, d\omega \, dx' + C_{T,b} \lambda^{n-d-2}.
\end{align*}
Taking $\epsilon$ in part (2) of Lemma \ref{b B decomposition} small enough so that $\epsilon C_T \leq 1$ yields \eqref{b bound}. This concludes the proof of Proposition \ref{looping prop}.


\newsection{Appendix: Stationary phase tool}

The following tool is a combination of Corollary 1.1.8 with the discussion at the end of Section 1.1 in ~\cite{FIOs}. Let $\phi(x,y)$ be a smooth phase function on $\R^m \times \R^n$ with
\[
	\nabla_y \phi(0,0) = 0 \qquad \text{ and } \qquad \det \nabla_y^2 \phi(0,0) \neq 0,
\]
and let $a(\lambda;x,y)$ be a smooth amplitude with small, adjustable support satisfying
\[
	|\partial_\lambda^j \partial_x^\alpha \partial_y^\beta a(\lambda;x,y)| \leq C_{j,\alpha,\beta} \lambda^{-j} \qquad \text{ for } \lambda \geq 1
\]
for $j = 0,1,2,\ldots$ and multiindices $\alpha$ and $\beta$. $\nabla_y^2 \phi \neq 0$ on a neighborhood of $0$ by continuity. There exists locally a smooth map $x \mapsto y(x)$ whose graph in $\R^m \times \R^n$ contains all points in a neighborhood of $0$ such that $\nabla_y \phi = 0$, by the implicit function theorem. Let $p$ and $q$ be integers denoting the number of positive and negative eigenvalues of $\nabla_y^2 \phi$, respectively, counting multiplicity. By continuity, $p$ and $q$ are constant on a neighborhood of $0$. We adjust the support of $a$ to lie in the intersection of these neighborhoods.

\begin{prop} \label{stationary phase tool}
	Let 
	\[
		I(\lambda; x) = \int_{\R^n} e^{i\lambda \phi(x,y)} a(\lambda; x, y) \, dy
	\]
	with $\phi$ and $a$ as above. Then,
	\begin{align*}
		I(\lambda; x) = (\lambda/2\pi)^{-n/2} &|\det \nabla_y^2 \phi(x,y(x))|^{-1/2} e^{\pi i (p - q)/4} e^{i\lambda \phi(x,y(x))} a(\lambda; x, y(x))\\
		&\hspace{8em}+ \lambda^{-n/2 - 1} e^{i\lambda \phi(x,y(x))} R(\lambda; x) + O(\lambda^{-N})
	\end{align*}
	for $N = 1,2,\ldots$, where $R$ has compact support,
	\[
		|\partial_\lambda^j \partial_x^\alpha R(\lambda; x)| \leq C_{j,\alpha} \lambda^{-j} \qquad \text{ for } \lambda \geq 1,
	\]
	and the $O(\lambda^{-N})$ term is constant in $x$.
\end{prop}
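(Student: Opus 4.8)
The plan is to recognize Proposition \ref{stationary phase tool} as the parameter-dependent form of the classical method of stationary phase: Corollary 1.1.8 of \cite{FIOs} applied in the $y$-variable with $x$ and $\lambda$ carried along as parameters, together with the bookkeeping for $\lambda$-dependent amplitudes described at the end of Section 1.1 of \cite{FIOs}. First I would normalize the location of the critical point: the implicit function theorem (as in the paragraph preceding the statement) gives a smooth map $x \mapsto y(x)$ near $0$ with $\nabla_y\phi(x,y(x)) = 0$, and after shrinking the support of $a$ the Hessian $\nabla_y^2\phi(x,y(x))$ stays nondegenerate with fixed signature $(p,q)$. Making the $x$-dependent substitution $y = z + y(x)$ (a translation, so with no Jacobian factor) and pulling out the prefactor $e^{i\lambda\phi(x,y(x))}$ gives
\[
	I(\lambda;x) = e^{i\lambda\phi(x,y(x))} \int_{\R^n} e^{i\lambda\psi(x,z)}\, \tilde a(\lambda;x,z)\, dz,
\]
where $\psi(x,z) = \phi(x,z+y(x)) - \phi(x,y(x))$ has $\psi(x,0) = 0$, $\nabla_z\psi(x,0) = 0$, $\nabla_z^2\psi(x,0) = \nabla_y^2\phi(x,y(x))$, and $\tilde a(\lambda;x,z) = a(\lambda;x,z+y(x))$ has small adjustable support and still satisfies $|\partial_\lambda^j\partial_x^\alpha\partial_z^\beta \tilde a| \le C_{j,\alpha,\beta}\lambda^{-j}$ since $y(\cdot)$ is smooth.

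Next I would apply the stationary phase expansion to the inner integral $J(\lambda;x) := \int_{\R^n} e^{i\lambda\psi(x,z)}\tilde a(\lambda;x,z)\,dz$ with $x$ and $\lambda$ frozen. Since $\psi(x,\cdot)$ has the single nondegenerate critical point $z = 0$, Corollary 1.1.8 of \cite{FIOs} gives
\[
	J(\lambda;x) = (\lambda/2\pi)^{-n/2}|\det\nabla_z^2\psi(x,0)|^{-1/2} e^{\pi i(p-q)/4}\Big( \tilde a(\lambda;x,0) + \sum_{1 \le k \le K}\lambda^{-k}(L_{k,x}\tilde a)(\lambda;x,0)\Big) + O(\lambda^{-n/2-K-1}),
\]
each $L_{k,x}$ a differential operator of order $\le 2k$ in $z$ with coefficients polynomial in the Taylor coefficients of $\psi$ at $z=0$, hence smooth in $x$. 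Multiplying back by $e^{i\lambda\phi(x,y(x))}$ and using $\tilde a(\lambda;x,0) = a(\lambda;x,y(x))$, $\nabla_z^2\psi(x,0) = \nabla_y^2\phi(x,y(x))$, the $k=0$ term is exactly the asserted main term. I would then define $R(\lambda;x) := \lambda^{n/2+1}\big( J(\lambda;x) - (\lambda/2\pi)^{-n/2}|\det\nabla_z^2\psi(x,0)|^{-1/2}e^{\pi i(p-q)/4}\,\tilde a(\lambda;x,0)\big)$, so that $I(\lambda;x)$ equals the main term plus $\lambda^{-n/2-1}e^{i\lambda\phi(x,y(x))}R(\lambda;x)$ identically, and the ``$O(\lambda^{-N})$'' term may be taken to vanish (or kept as a genuinely $O(\lambda^{-N})$, $x$-uniform error if one prefers $R$ to consist of finitely many explicit terms, which is harmless after the outer integration). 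What remains is to verify that $R$ is a symbol of order $0$ in $\lambda$ obeying $|\partial_\lambda^j\partial_x^\alpha R| \le C_{j,\alpha}\lambda^{-j}$; equivalently, that the expansion of $J$ is uniform in $x$ and survives differentiation in $x$.

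This uniformity in the parameter $x$ is the only substantive point, and is where I expect the work to lie. The naive worry is that $\partial_{x_i}$ hitting $e^{i\lambda\psi}\tilde a$ brings down $i\lambda\,\partial_{x_i}\psi\cdot\tilde a$ and spoils the powers of $\lambda$. The resolution -- precisely the device at the end of Section 1.1 of \cite{FIOs} -- is that having extracted the prefactor at the critical manifold we have $\partial_{x_i}\psi(x,0) = 0$; by Hadamard's lemma $\partial_{x_i}\psi(x,z) = \sum_k z_k g_k(x,z)$ with $g_k$ smooth, and since $\nabla_z^2\psi(x,0)$ is invertible one has $z = B(x,z)^{-1}\nabla_z\psi(x,z)$ near $z=0$ with $B$ smooth, so a single integration by parts in $z$ trades the offending factor of $\lambda$ for a $z$-derivative. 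Hence $\partial_{x_i}J$ is again an oscillatory integral with phase $\psi$ and an amplitude of the same class, and by induction so is every $\partial_x^\alpha J$; the stationary phase expansion then applies to $J$ and to each $\partial_x^\alpha J$ with remainder bounded uniformly on the compact support, which yields the claimed symbol bounds on $R$. The normalization factor $(\lambda/2\pi)^{-n/2}|\det\nabla_z^2\psi(x,0)|^{-1/2}$ and the phase $e^{\pi i(p-q)/4}$ are the standard Gaussian computation of Section 1.1 of \cite{FIOs}, which I would quote rather than reproduce.
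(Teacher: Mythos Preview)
Your argument is correct, but it takes a different route from the paper's. The paper does not apply Corollary~1.1.8 of \cite{FIOs} as a black box and then repair the parameter dependence afterward; instead it invokes the Morse--Bott lemma to produce, for each $x$, a smooth change of variables $y \mapsto F(x,y)$ carrying the phase $\Phi(x,\cdot) = \phi(x,\cdot)-\phi(x,y(x))$ to the fixed quadratic form $Q(y)=\tfrac12(y_1^2+\cdots+y_p^2-y_{p+1}^2-\cdots-y_n^2)$. After this change of variables the phase is \emph{independent of $x$ and $\lambda$}, so all parameter derivatives fall only on the amplitude, and the symbol bounds on $R$ follow immediately from \cite[Lemma~1.1.6]{FIOs} applied termwise, with no integration-by-parts trick needed. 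The $x$-independent $O(\lambda^{-N})$ piece in the statement then arises naturally as the tail $\int e^{i\lambda Q(y)}(1-\chi(|y|))\,dy$, which is visibly constant in $x$. Your approach---keep the $x$-dependent phase $\psi(x,z)$ and recover the lost power of $\lambda$ in $\partial_x J$ via Hadamard's lemma and the identity $z = B(x,z)^{-1}\nabla_z\psi$---is the other standard way to handle parameter uniformity and is what the end of \S1.1 of \cite{FIOs} outlines; it works but requires more bookkeeping (and you should note the same device handles the $\partial_\lambda$ derivatives, since $\partial_\lambda$ on $e^{i\lambda\psi}$ brings down $i\psi$, which vanishes to second order at $z=0$). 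The Morse--Bott route buys you a cleaner one-line verification of the $\partial_x^\alpha\partial_\lambda^j R$ estimates at the price of quoting a normal-form lemma; your route avoids that citation but trades it for an inductive integration-by-parts argument.
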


\begin{proof}
	We have
	\begin{equation} \label{eI}
		e^{-i\lambda \phi(x,y(x))} I(\lambda;x) = \int_{\R^n} e^{i\lambda \Phi(x,y)} a(\lambda; x,y) \, dy
	\end{equation}
	where we have set
	\[
		\Phi(x,y) = \phi(x,y) - \phi(x,y(x)).
	\]
	The proof of the Morse-Bott lemma in ~\cite{MorseBott} lets us construct a smooth map $F : \R^m \times \R^n \to \R^n$ such that $y \mapsto F(x,y)$ is a diffeomorphism between neighborhoods of $0$ in $\R^n$ for each $x$, and for which
	\[
		F(x,0) = y(x)
	\]
	and
	\[
		\Phi(x,F(x,y)) = \frac{1}{2} (y_1^2 + \cdots + y_p^2 - y_{p+1}^2 - \cdots - y_{n}^2) = Q(y).
	\]
	Applying a change of variables in $y$ to \eqref{eI} yields
	\begin{align*}
		&= \int_{\R^n} e^{i\lambda Q(y)} a(\lambda; x,F(x,y)) |\det D_yF(x,y)| \, dy \\
		&= \int_{\R^n} e^{i\lambda Q(y)} a(\lambda; x, F(x,0)) |\det D_y F(x,0)| \, dy + \int_{\R^n} e^{i\lambda Q(y)} r(\lambda; x,y) \, dy
	\end{align*}
	where
	\[
		r(\lambda;x,y) = a(\lambda; x,F(x,y)) |\det D_yF(x,y)| - a(\lambda; x, F(x,0)) |\det D_y F(x,0)|.
	\]
	The first term evaluates to
	\[
		(\lambda/2\pi)^{-n/2} |\det \nabla_y^2 \phi(x,y(x))|^{-1/2} e^{\pi i (p - q)/4} a(\lambda; x, y(x))
	\]
	since
	\[
		\int_{-\infty}^\infty e^{i\lambda t^2/2} \, dt = (\lambda/2\pi)^{-1/2} e^{\pi i/4}
	\]
	and
	\[
		|\det \nabla_y^2 \phi(x,y(x))| = |\det D_y F(x,0)|^{-2}.
	\]
	To estimate the second term, we let $\chi$ be a smooth compactly supported cutoff function with $\chi(|y|) = 1$ for all $y \in \supp_y a$. Then,
	\begin{align*}
		\int_{\R^n} &e^{i\lambda Q(y)} r(\lambda; x,y) \, dy\\
			&= \int_{\R^n} e^{i\lambda Q(y)} \chi(|y|) r(\lambda; x,y)  \, dy + \int_{\R^n} e^{i\lambda Q(y)} (1 - \chi(|y|)) r(\lambda; x,y)  \, dy\\
			&= R_1(\lambda;x) + R_2(\lambda;x),
	\end{align*}
	respectively. Since $r(\lambda;x,y)$ vanishes for $y = 0$,
	\[
		|\partial_\lambda^j \partial_x^{\alpha} R_1(\lambda;x)| \leq C_{j,\alpha} \lambda^{-n/2-1-j}
	\]
	by ~\cite[Lemma 1.1.6]{FIOs} applied to the $x$-derivatives of $R_1$. Finally,
	\[
		R_2(\lambda; x) = c\int_{\R^n} e^{i\lambda Q(y)} (1 - \chi(|y|)) \, dy = O(\lambda^{-N})
	\]
	by integration by parts.
\end{proof}


\end{document}